\title[Exponential convergence in supercritical kinetically constrained models]{Exponential 
convergence to equilibrium in supercritical kinetically constrained models 
at high temperature}
\author{Laure \textsc{Marêché}}
\address{Laure Mar\^ech\'e, LPSM UMR 8001, Universit\'e Paris Diderot, Sorbonne Paris Cit\'e, CNRS, 
75013 Paris, France}
\email{mareche@lpsm.paris}
\thanks{I acknowledge the support of the ERC Starting Grant 680275 MALIG}
\theoremstyle{plain}
\newtheorem{theorem}{Theorem}
\newtheorem{proposition}[theorem]{Proposition}
\newtheorem{lemma}[theorem]{Lemma}
\theoremstyle{remark}
\newtheorem{remark}[theorem]{Remark}
\theoremstyle{definition}
\newtheorem{definition}[theorem]{Definition}
\begin{document}

\maketitle

\begin{center}
\begin{minipage}{0.8\textwidth}
\begin{small}
\textbf{Abstract:} Kinetically constrained models (KCMs) were introduced by physicists to 
  model the liquid-glass transition. They are interacting particle systems on $\mathds{Z}^d$ in which 
  each element of $\mathds{Z}^d$ can be in state 0 or 1 and tries to update its state to 0 at rate $q$ and to 1 at rate $1-q$, 
  provided that a constraint is satisfied. 
  In this article, we prove the first non-perturbative result of convergence to 
  equilibrium for KCMs with general constraints: for any KCM in the class 
  termed ``supercritical'' in dimension 1 and 2, when the initial configuration has product $\mathrm{Bernoulli}(1-q')$ 
  law with $q' \neq q$, the dynamics converges to equilibrium with exponential speed when $q$ is close enough to 1, 
  which corresponds to the high temperature regime. 

\medskip

\textbf{2010 Mathematics Subject Classification:} 60K35.

\textbf{Key words:} Interacting particle systems; Glauber dynamics; kinetically constrained models; 
bootstrap percolation; convergence to equilibrium.
\end{small}
\end{minipage}
\end{center}

\section{Introduction}
Kinetically constrained models (KCMs) are interacting particle systems on $\mathds{Z}^d$, in 
which each element (or \emph{site}) of $\mathds{Z}^d$ can be in state 0 or 1. Each site tries 
to update its state to 0 at rate $q$ and to 1 at rate $1-q$, with $q \in [0,1]$ fixed, but an update is accepted if and only if a 
\emph{constraint} is satisfied. This constraint is defined via an \emph{update family} $\mathcal{U}=\{X_1,\dots,X_m\}$, 
where $m \in \mathds{N}^*$ and the $X_i$, called \emph{update rules}, are finite nonempty subsets of 
$\mathds{Z}^d \setminus \{0\}$: the constraint is satisfied at a site $x$ if and only if there exists $X \in \mathcal{U}$ 
such that all the sites in $x+X$ have state zero. Since the constraint at a site does not depend on the state of the 
site, it can be easily checked that the product $\mathrm{Bernoulli}(1-q)$ measure, $\nu_q$, 
satisfies the detailed balance with respect to the dynamics, hence is reversible 
and invariant. $\nu_q$ is the \emph{equilibrium measure} of the dynamics. 

KCMs were introduced in the physics literature by Fredrickson and Andersen \cite{Fredrickson_et_al1984} 
to model the liquid-glass transition, an important open problem 
in condensed matter physics (see \cite{Ritort_et_al,Garrahan_et_al}). In addition to this physical interest, 
KCMs are also mathematically challenging, because the presence of the constraints make them very different from 
classical Glauber dynamics and prevents the use of most of the usual tools. 

One of the most important features of KCMs is the existence of blocked configurations. 
These blocked configurations imply that the equilibrium measure $\nu_q$ is not 
the only invariant measure, which complicates a lot the study of the out-of equilibrium behavior 
of KCMs; even the basic question of their convergence to $\nu_q$ remains open in most cases. 

Because of the blocked configurations, one cannot expect such a convergence to equilibrium for all initial 
laws. Initial measures particularly relevant for physicists are the $\nu_{q'}$ with $q' \neq q$ (see \cite{Leonard_et_al2007}). 
Indeed, $q$ is a measure of the temperature of the system: the closer $q$ is to 0, the lower the temperature is. 
Therefore, starting the dynamics with a configuration of law $\nu_{q'}$ means starting with a temperature different 
from the equilibrium temperature. In this case, KCMs are expected to converge to equilibrium 
with exponential speed as soon as no site is blocked for the dynamics in a configuration of law $\nu_{q}$ or $\nu_{q'}$. 
However, there have been few results in this direction so far 
(see \cite{Cancrini_et_al2010,Blondel_et_al2013,stretched_exponential_East-like,Mountford_FA1f,Mareche2019Est}),
and they have been restricted to particular update families or initial laws.

Furthermore, general update families have attracted a lot of attention in recent years. 
Indeed, there recently was a breakthrough in the study of a monotone 
deterministic counterpart of KCMs called bootstrap percolation. Bootstrap percolation is a discrete-time dynamics 
in which each site of $\mathds{Z}^d$ can be \emph{infected} or not; 
infected sites are the bootstrap percolation equivalent of sites at zero. 
To define it, we fix an update family $\mathcal{U}$ and choose a set $A_0$ of initially 
infected sites; then for any $t \in \mathds{N}^*$, the set of sites that are infected at time $t$ is
\[
A_t = A_{t-1} \cup \{x \in \mathds{Z}^d \,|\, \exists X \in \mathcal{U}, x+X \subset A_{t-1}\},
\]
which means that the sites that were infected at time $t-1$ remain infected at time $t$ and a
site $x$ that was not infected at time $t-1$ becomes infected at time $t$ if and only if there 
exists $X \in \mathcal{U}$ such that all sites of $x + X$ are infected at time $t-1$. 
Until recently, bootstrap percolation had only been considered with particular update families, 
but the study of general update families was opened by Bollobás, Smith and Uzzell in \cite{Bollobas_et_al2015}. 
Along with Balister, Bollobás, Przykucki and Smith \cite{Balister_et_al2016}, they proved that
general update families satisfy the following universality result: in dimension 2, they 
can be sorted into three classes, \emph{supercritical}, \emph{critical} and \emph{subcritical} 
(see definition \ref{def_universality_classes}), which display different 
behaviors (their result for the critical class was later refined by Bollobás, Duminil-Copin, Morris and Smith 
in \cite{Bollobas_et_al2017}). 

These works opened the study of KCMs with general update families. 
In \cite{MMT,lbounds_infection_time,Hartarsky_et_al2019,Hartarsky_et_al2019bis}, Hartarsky, Martinelli, Morris, 
Toninelli and the author showed that the grouping of two-dimensional update families 
into supercritical, critical and subcritical is still relevant for KCMs, and 
established an even more precise classification. However, these results deal only with equilibrium dynamics. 
Until now, nothing had been shown on out-of-equilibrium 
KCMs with general update families, apart from a perturbative result in dimension 1 \cite{Cancrini_et_al2010}. 

In this article, we prove that for all supercritical update families, for any initial law $\nu_{q'}$, $q'\in]0,1]$, 
when $q$ is close enough to 1, the dynamics of the KCM converges to equilibrium with exponential speed. 
This result holds in dimension 2 and also in dimension 1 for a good definition of 
one-dimensional supercritical update families. It is the first non-perturbative result of convergence 
to equilibrium holding for a whole class of update families.

This result may help to gain a better understanding of the out-of-equilibrium behavior of supercritical KCMs. 
In particular, such results of convergence to equilibrium were key in proving ``shape theorems'' 
for specific one-dimensional constraints in \cite{Blondel2013,Ganguly_et_al,Blondel_et_al2018}.

\section{Notations and result}

Let $d \in \mathds{N}^*$. We denote by $\|.\|_\infty$ the $\ell^\infty$-norm on $\mathds{Z}^d$. 
For any set $S$, $|S|$ will denote the cardinal of~$S$. 

For any configuration $\eta \in \{0,1\}^{\mathds{Z}^d}$, for any $x\in \mathds{Z}^d$, 
we denote $\eta(x)$ the value of $\eta$ at $x$. Moreover, for any $S \subset \mathds{Z}^d$, 
we denote $\eta_S$ the restriction of $\eta$ to $S$, and $0_S$ (or just 0 when $S$ is clear from the context) 
the configuration on $\{0,1\}^S$ that contains only zeroes. 

We set an update family $\mathcal{U}=\{X_1,\dots,X_m\}$ with $m \in \mathds{N}^*$ and the $X_i$ 
finite nonempty subsets of $\mathds{Z}^d \setminus \{0\}$. 
To describe the classification of update families, we need the concept of \emph{stable directions}. 

\begin{definition}
For $u \in S^{d-1}$, we denote $\mathds{H}_u = \{x \in \mathds{R}^d \,|\, \langle x,u \rangle < 0 \}$ 
the half-space with boundary orthogonal to $u$. We say that $u$ is a \emph{stable direction} 
for the update family $\mathcal{U}$ if there does not exist $X \in \mathcal{U}$ such that 
$X \subset \mathds{H}_u$; otherwise $u$ is \emph{unstable}. We denote by $\mathcal{S}$ the set of stable directions. 
\end{definition}

\cite{Bollobas_et_al2015} gave a classification of two-dimensional update families into 
supercritical, critical or subcritical depending on their stable directions. 
Here is the generalization proposed for $d$-dimensional update families 
by \cite{Bollobas_et_al2017} (definition 9.1 therein), where for any $\mathcal{E} \subset S^{d-1}$, 
$\mathrm{int}(\mathcal{E})$ is the interior of $\mathcal{E}$ in the usual topology on $S^{d-1}$. 

\begin{definition}\label{def_universality_classes}
A $d$-dimensional update family $\mathcal{U}$ is 
\begin{itemize}
\item supercritical if there exists an open hemisphere $C \subset S^{d-1}$ that contains no stable direction; 
\item critical if every open hemisphere $C \subset S^{d-1}$ contains a stable direction, 
but there exists a hemisphere $C \subset S^{d-1}$ such that $\mathrm{int}(C \cap \mathcal{S}) = \emptyset$;
\item subcritical if $\mathrm{int}(C \cap \mathcal{S}) \neq \emptyset$ for every hemisphere $C \subset S^{d-1}$.
\end{itemize}
\end{definition}

Our result will be valid for supercritical update families. 

The KCM process with update family $\mathcal{U}$ can be constructed as follows. We set $q \in [0,1]$. 
Independently for all $x \in \mathds{Z}^d$, we define two independent Poisson point processes
$\mathcal{P}^0_x$ and $\mathcal{P}^1_x$ on $[0,+\infty[$, with respective rates $q$ and $1-q$. 
We call the elements of $\mathcal{P}^0_x \cup \mathcal{P}^1_x$ \emph{clock rings} 
and denote them by $t_{1,x} < t_{2,x} < \cdots$. The 
elements of $\mathcal{P}^0_x$ will be \emph{0-clock rings} and the elements of 
$\mathcal{P}^1_x$ will be \emph{1-clock rings}. 
For any intial configuration $\eta \in \{0,1\}^{\mathds{Z}^d}$, we construct the KCM as the 
continuous-time process $(\eta_t)_{t \in [0,+\infty[}$ on $\{0,1\}^{\mathds{Z}^d}$ defined thus: 
for any $x \in \mathds{Z}^d$, $\eta_t(x)=\eta_0(x)$ for $t \in [0,t_{1,x}[$, and for any 
$k \in \mathds{N}^*$, 
\begin{itemize}
\item if there exists $X \in \mathcal{U}$ such that $(\eta_{t_{k,x}^-})_{x+X}=0_{x+X}$, then 
$\eta_t(x)=\varepsilon$ for $t \in [t_{k,x},t_{k+1,x}[$, where $t_{x,k}$ is a 
$\varepsilon$-clock ring, $\varepsilon \in \{0,1\}$; 
\item if such an $X$ does not exist, $\eta_t(x)=\eta_{t_{k-1,x}}(x)$ for $t \in [t_{k,x},t_{k+1,x}[$.
\end{itemize}
In other words, sites try to update themselves to 0 when there is a 0-clock ring, which happens at rate $q$,
and to 1 when there is a 1-clock ring, which happens at rate $1-q$, 
but an update at $x$ is successful if and only if there exists an update rule $X$ such that 
all sites of $x+X$ are at zero. This construction is known as \emph{Harris graphical construction}. 
One can use the arguments in part 4.3 of \cite{Swart2017} to see that it is well-defined.
We denote by $\mathds{P}_\nu$ the law of $(\eta_t)_{t \in [0,+\infty[}$ when the initial configuration has law $\nu$.

For any $q' \in [0,1]$, we denote $\nu_{q'}$ the product $\mathrm{Bernoulli}(1-q')$ measure. 
Since the constraint at a site does not depend on the state of the site, it can be easily checked that $\nu_q$ 
satisfies the detailed balance with respect to the dynamics, hence is reversible and invariant. 
$\nu_q$ is called equilibrium measure of the dynamics. 

We will say that a function $f : \{0,1\}^{\mathds{Z}^d} \mapsto \mathds{R}$ is 
\emph{local} if its output depends only on the states of a finite set of sites, and we then denote 
$\|f\|_\infty = \sup_{\eta \in \{0,1\}^{\mathds{Z}^d}}|f(\eta)|$ its norm.

\begin{theorem}\label{thm_convergence}
If $d=1$ or 2, for any supercritical update family $\mathcal{U}$, 
for any $q' \in ]0,1]$, there exists $q_0=q_0(\mathcal{U},q') \in [0,1[$ 
such that for any $q \in [q_0,1]$, for any local function 
$f: \{0,1\}^{\mathds{Z}^d} \mapsto \mathds{R}$, 
there exist two constants $c=c(\mathcal{U},q')>0$ and $C=C(\mathcal{U},q',f)>0$ such that 
for any $t \in [0,+\infty[$,
\[
  \left| \mathds{E}_{\nu_{q'}} (f(\eta_t))-\nu_q(f) \right| \leq C e^{-ct}.
\]
\end{theorem}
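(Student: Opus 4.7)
The plan is to combine a grand coupling, finite speed of propagation, and the droplet structure provided by supercriticality. By invariance of $\nu_q$ under the dynamics, $\mathds{E}_{\nu_q}(f(\eta_t)) = \nu_q(f)$, so it suffices to bound $|\mathds{E}_{\nu_{q'}}(f(\eta_t)) - \mathds{E}_{\nu_q}(f(\eta_t))|$. I would introduce two KCMs $(\eta^{(q')}_t)$ and $(\eta^{(q)}_t)$ driven by the same Poisson clock processes $\mathcal{P}^0_x$ and $\mathcal{P}^1_x$ at every site, with initial conditions coupled site-by-site via the optimal Bernoulli coupling. Standard Poisson-tail estimates on the Harris construction (analogous to those in section 4.3 of \cite{Swart2017}) then show that, outside an event of probability $e^{-c_1 t}$, the difference $f(\eta^{(q')}_t) - f(\eta^{(q)}_t)$ is determined by the clock rings and initial data inside a box $\Lambda_t$ of side length $Vt$ centered on $\mathrm{supp}(f)$, for some $V$ depending only on $\mathcal{U}$. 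This reduces the problem to a coupling question in a finite region.

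The role of the supercriticality assumption is to supply \emph{droplets}. The open hemisphere $C$ with no stable direction can be covered by finitely many open half-spaces $\mathds{H}_{u_1}, \dots, \mathds{H}_{u_k}$, each associated with an update rule $X_j \subset \mathds{H}_{u_j}$. Iteratively applying these rules one can construct a bounded pattern $D$ together with a box $B \supset D$ such that, whenever every site of $D$ is at zero, legal updates inside $B$ can bring every site of $B$ to zero and moreover can translate $D$ by one lattice step in any direction of $C$; by the same mechanism and the fact that constraints do not depend on the updated site's state, legal updates on $B$ are irreducible once the droplet is in place. At high temperature, i.e.\ $q$ close enough to 1, the probability that a translate of $D$ consists entirely of zeros in a $\nu_q$- or $\nu_{q'}$-sample exceeds $1 - \varepsilon$ in any fixed-size box, so typical initial configurations contain a dense periodic grid of droplets in $\Lambda_t$.

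Given these ingredients, I would define a good event $\mathcal{G}_{\Lambda_t}$ requiring (a) a dense grid of initial droplets everywhere in $\Lambda_t$ in both copies, and (b) for each site $x \in \mathrm{supp}(f)$, a successful \emph{droplet-driven refresh} before time $t$: a droplet is transported along a path of legal updates into a fixed-size neighborhood of $x$, used to resample $\eta^{(q')}(x)$ and $\eta^{(q)}(x)$ jointly from the $\mathrm{Bernoulli}(1-q)$ marginal, and the two copies are forced to coincide on $x$ afterwards. On $\mathcal{G}_{\Lambda_t}$, the coupled processes agree on $\mathrm{supp}(f)$ at time $t$; the complement has probability at most $C e^{-c t}$ once we combine the finite-speed-of-propagation estimate, the droplet-density estimate (which needs $q_0$ close to 1), and the refresh-time estimate. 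The main obstacle is (b): droplets must be reliably transported without being destroyed by $1$-clock rings, and the coupling must be designed so that the post-refresh marginal is \emph{exactly} $\mathrm{Bernoulli}(1-q)$, which likely requires identifying in advance a deterministic schedule of clock rings used to implement each refresh and exploiting the fact that $q$ close to 1 makes $0$-rings far more frequent than $1$-rings along any droplet path. The rate $c$ is then determined by the slowest of the three mechanisms, while $C$ inherits its dependence on $f$ through $\|f\|_\infty$ and $|\mathrm{supp}(f)|$.
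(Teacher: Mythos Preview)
Your opening reduction is correct and matches the paper: couple $(\eta_t)$ and $(\tilde\eta_t)$ via the same clock rings and bound $\mathds{P}_{q',q}(\eta_t(x)\neq\tilde\eta_t(x))$ for each $x$ in the support of $f$. The droplet ingredient from supercriticality is also the right one, though the paper only needs transport in a single direction $u$ (Proposition~\ref{prop_Bollobas}), not in every direction of the hemisphere.

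The gap is in step (b). First, ``resample \ldots\ from the $\mathrm{Bernoulli}(1-q)$ marginal'' is a red herring: you do not need $\eta_t(x)$ to have a prescribed law, you need $\eta_t(x)=\tilde\eta_t(x)$, and under the shared-clock coupling a clock ring at $x$ with the constraint satisfied in \emph{both} copies already achieves that. The real problem is the clause ``forced to coincide on $x$ afterwards'': even if the two copies agree at $x$ at some time $s<t$, nothing prevents them from diverging again before time $t$, since the constraint at $x$ may later be satisfied in one copy and not the other (the configurations differ elsewhere), after which the next clock ring at $x$ decouples them. A forward-in-time argument of your type would have to control an ever-growing neighbourhood of $x$, and you give no mechanism for that. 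The paper sidesteps this by working \emph{backwards}: if $\eta_t(x)\neq\tilde\eta_t(x)$ one can build a dual path from $(x,t)$ going back in time, jumping only at clock rings and only within range $\rho$, along which the two processes disagree at every step (Lemma~\ref{lemma_activated_paths}); hence it suffices to show that every dual path of length $t/K$ meets a space-time point where both processes are zero. This is made quantitative by discretising dual paths into codings (section~\ref{sec_codings}) and by encoding the droplet transport as a one-dimensional oriented percolation $\zeta^{y,k}$ depending only on clock rings (section~\ref{sec_aux_proc}): survival of $\zeta^{y,k}$ produces order $t/K$ candidate rectangles near time $0$, and since $\nu_{q'}$ and $\nu_q$ are product measures, with overwhelming probability at least one such rectangle is all-zero in each initial configuration separately. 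This dual-path device is the missing idea in your outline.
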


\begin{remark}
We expect theorem \ref{thm_convergence} to hold also for $d \geq 3$. However, our proof 
relies on proposition \ref{prop_Bollobas}, which is easy for $d=1$ and was proven in \cite{Bollobas_et_al2015} 
for $d=2$, but for which there is no equivalent for $d \geq 3$. Such an equivalent would extend our result 
to $d \geq 3$.
\end{remark}

The remainder of this article is devoted to the proof of theorem \ref{thm_convergence}. 
The argument is based on the proof given in \cite{Mountford_FA1f} for the particular case of the 
Fredrickson-Andersen one-spin facilitated model, but brings in novel ideas 
in order to accommodate the much greater complexity of general supercritical models. 
From now on, we fix $d=1$ or 2 and $\mathcal{U}$ a supercritical update family in dimension $d$. 
We begin in section \ref{sec_dual_paths} by using the notion of 
dual paths to reduce the proof of theorem \ref{thm_convergence} 
to the simpler proof of proposition \ref{prop_all_paths_activated}. Then in section \ref{sec_codings} 
we use the concept of codings to simplify it further, reducing it to the proof of proposition \ref{prop_bound_single_coding}. 
In section \ref{sec_aux_proc} we introduce an auxiliary oriented percolation process, that we 
use in section \ref{sec_preuve_codings} to prove proposition 
\ref{prop_bound_single_coding} hence theorem \ref{thm_convergence}. 

\section{Dual paths}\label{sec_dual_paths}

In this section, we use the concept of \emph{dual paths} 
to reduce the proof of theorem \ref{thm_convergence} to the easier 
proof of proposition \ref{prop_all_paths_activated}.
Let $q,q' \in [0,1]$. We notice that the Harris graphical construction allows us to 
couple a process $(\eta_t)_{t \in [0,+\infty[}$ with initial law $\nu_{q'}$ and 
a process $(\tilde{\eta}_t)_{t \in [0,+\infty[}$ with initial law $\nu_q$ 
by using the same clock rings but different initial configurations (independent of the clock rings and of each other). 
We denote the joint law by $\mathds{P}_{q',q}$. We notice that since $\nu_q$ is an invariant measure for 
the dynamics, $\tilde{\eta}_t$ has law $\nu_q$ for all $t \in [0,+\infty[$. To prove theorem \ref{thm_convergence}, 
it is actually enough to show 

\begin{proposition}\label{prop_conv_1site}
For any $q' \in ]0,1]$, there exists $q_0=q_0(\mathcal{U},q') \in [0,1[$ 
such that for any $q \in [q_0,1]$, there exist 
two constants $c_1=c_1(\mathcal{U},q')>0$ and $C_1=C_1(\mathcal{U},q')>0$ such that 
for any $x \in \mathds{Z}^d$ and $t \in [0,+\infty[$, 
$\mathds{P}_{q',q}(\eta_t(x) \neq \tilde{\eta}_t(x)) \leq C_1 e^{-c_1t}$.
\end{proposition}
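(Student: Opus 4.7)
The plan is to prove Proposition \ref{prop_conv_1site} by tracing, for a fixed site $x$ and time $t$, the ``history of disagreement'' backwards through the coupled Harris graphical construction, and to show via supercriticality that this backward history dies out exponentially fast. The (short) final step from the proposition back to Theorem \ref{thm_convergence} is just a union bound: for a local function $f$ depending on a finite set $F$ of sites,
\[
\left| \mathds{E}_{\nu_{q'}}(f(\eta_t)) - \nu_q(f) \right| = \left| \mathds{E}_{q',q}(f(\eta_t) - f(\tilde{\eta}_t)) \right| \leq 2\|f\|_\infty \sum_{x \in F} \mathds{P}_{q',q}(\eta_t(x) \neq \tilde{\eta}_t(x)),
\]
which combined with Proposition \ref{prop_conv_1site} yields the theorem with $C = 2|F|\|f\|_\infty C_1$ and $c = c_1$.

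For the proposition itself I would first introduce \emph{dual paths}. If $\eta_t(x) \neq \tilde{\eta}_t(x)$, I look at the largest clock ring time $s \leq t$ at $x$: if there is none, the disagreement is inherited from the initial condition (an event of probability $e^{-t}$); otherwise, since both processes share the same clock ring and the same $\varepsilon \in \{0,1\}$, the disagreement can persist past $s$ only if the constraint at $x$ is satisfied in one process and not the other, which forces the existence of some $y \in x+X$ with $\eta_{s^-}(y) \neq \tilde{\eta}_{s^-}(y)$ for some $X \in \mathcal{U}$. Iterating this observation produces a backward path of space-time points of potential disagreement whose existence is necessary for the event at $(x,t)$. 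This reduces Proposition \ref{prop_conv_1site} to showing that, with probability at least $1-C_1 e^{-c_1 t}$, every such path gets ``activated'' (i.e.\ encounters a clock ring whose constraint is satisfied by zeros present in both processes, collapsing the disagreement) well before reaching time $0$ — precisely Proposition \ref{prop_all_paths_activated}.

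Because there is a continuum of dual paths, I would then introduce \emph{codings}: discrete combinatorial skeletons describing which update rule $X \in \mathcal{U}$ is used at each backward hop and the relative spatial shifts, so that every realized dual path is compatible with some coding. A union bound over codings reduces the proof to a single-coding estimate, Proposition \ref{prop_bound_single_coding}, as long as the number of codings of ``length'' $n$ grows only exponentially at a rate that can be dominated by the per-coding decay. For the single-coding bound I would exploit supercriticality: pick $u$ in the interior of an open hemisphere containing no stable direction and $X_0 \in \mathcal{U}$ with $X_0 \subset \mathds{H}_u$, so that zeros can always propagate in the direction $-u$ via $X_0$. I would compare the spread of zeros created by the $0$-clock rings to an auxiliary oriented percolation process that, for $q$ close to $1$ (where $0$-rings dominate and $1$-rings are rare), is deep in its supercritical regime. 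The activation of a given coding reduces to a percolation event whose failure probability decays exponentially in the coding length, which after the union bound yields the required $C_1 e^{-c_1 t}$.

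The main obstacle is the coding step. For FA-1f, as in \cite{Mountford_FA1f}, the update rules are trivial and dual paths live on a simple lattice skeleton, but for arbitrary supercritical $\mathcal{U}$ the geometry of the update rules can be essentially arbitrary, and the codings must simultaneously (a) be rich enough to capture every possible backward hop pattern, (b) have a counting estimate that remains subexponential (or exponential with a small rate) in the length, and (c) still admit a clean comparison with the auxiliary oriented percolation at the end. Threading this needle — using only the abstract supercriticality hypothesis from Definition \ref{def_universality_classes} together with the dimension-$2$ input of Proposition \ref{prop_Bollobas} — is where the bulk of the technical work will lie.
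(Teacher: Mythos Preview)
Your proposal is correct and follows essentially the same route as the paper: dual paths reduce Proposition \ref{prop_conv_1site} to Proposition \ref{prop_all_paths_activated}, codings reduce this further to the single-coding estimate Proposition \ref{prop_bound_single_coding}, and the latter is handled by comparison with an auxiliary oriented percolation built from the supercritical droplet mechanism of Proposition \ref{prop_Bollobas}. The one implementation difference worth flagging is that the paper's codings are \emph{not} records of every backward hop and update rule, but only the positions of the dual path at a sparse grid of times $0,K,2K,\dots,\lfloor t/K^2\rfloor K$ (so only $\lfloor t/K^2\rfloor+1$ points); this coarseness is what makes the counting in Lemma \ref{lemma_number_codings} --- roughly $(\beta K)^{(d+1)t/K^2}$ codings --- small enough to be beaten by a per-coding decay of order $e^{-c_4 t/K}$ with $c_4$ depending only on $\mathcal{U}$ and $q'$, whereas recording every hop would force the decay rate itself to absorb an entropy factor per hop.
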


Indeed, if $f: \{0,1\}^{\mathds{Z}^d} \mapsto \mathds{R}$ is a local function 
depending of a finite set of sites $S$, 
\[
  \left| \mathds{E}_{\nu_{q'}} (f(\eta_t))-\nu_q(f) \right| 
  = \left| \mathds{E}_{q',q} (f(\eta_t))-\mathds{E}_{q',q}(f(\tilde{\eta}_t)) \right| 
\leq \mathds{E}_{q',q}(|f(\eta_t)-f(\tilde{\eta}_t)|) 
\]
\[
\leq 2\|f\|_\infty \mathds{P}_{q',q}((\eta_t)_S \neq (\tilde{\eta}_t)_S)
\leq 2\|f\|_\infty \sum_{x \in S} \mathds{P}_{q',q}(\eta_t(x) \neq \tilde{\eta}_t(x)).
\]

Therefore we will work on proving proposition \ref{prop_conv_1site}. 

In order to do that, we need to introduce dual paths. We define the \emph{range} $\rho$ of $\mathcal{U}$ by 
\[
\rho = \max \{\|x\|_\infty \,|\, x \in X, X \in \mathcal{U}\}.
\]
For any $x \in \mathds{Z}^d$, $t > 0$ and $0 \leq t' \leq t$, 
a dual path of length $t'$ starting at $(x,t)$ (see figure \ref{fig_dual_paths}) 
is a right-continuous path $(\Gamma(s))_{0 \leq s \leq t'}$ that starts at site $x$ at time $t$, 
goes backwards, is allowed to jump only when there is a clock ring, and only to a site within 
$\ell^\infty$-distance $\rho$. To write it rigorously, 
the path satisfies $\Gamma(0)=x$ and there exists a sequence 
of times $0=s_0 < s_1 < \cdots < s_{n}=t'$ satisfying the following properties: 
for all $0 \leq k \leq n-1$ and all $s \in [s_k,s_{k+1}[$, 
$\Gamma(s)=\Gamma(s_k)$, $\Gamma(s_n) = \Gamma(s_{n-1})$ and 
for all $0 \leq k < n-1$, $t-s_{k+1} \in \mathcal{P}^0_{\Gamma(s_k)} \cup 
\mathcal{P}^1_{\Gamma(s_k)}$ and $\|\Gamma(s_{k+1}) - \Gamma(s_k)\|_\infty \leq \rho$. 

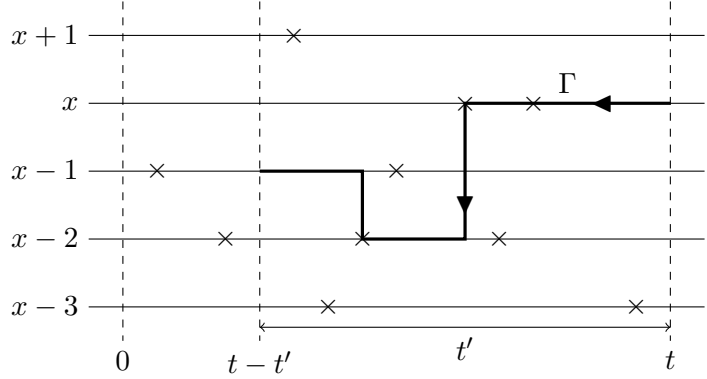
\begin{figure}
\begin{tikzpicture}[scale=0.9]
\draw (0,0)--(9,0) ;
\draw (0,1)--(9,1) ;
\draw (0,2)--(9,2) ;
\draw (0,3)--(9,3) ;
\draw (0,4)--(9,4) ;
\draw (0,0) node[left]{$x-3$} ;
\draw (0,1) node[left]{$x-2$} ;
\draw (0,2) node[left]{$x-1$} ;
\draw (0,3) node[left]{$x$} ;
\draw (0,4) node[left]{$x+1$} ;
\draw[dashed] (0.5,4.5)--(0.5,-0.5) node[below] {$0$} ;
\draw[dashed] (2.5,4.5)--(2.5,-0.5) node[below] {$t-t'$} ;
\draw[dashed] (8.5,4.5)--(8.5,-0.5) node[below] {$t$} ;
\draw[<->] (2.5,-0.3)--(8.5,-0.3) node[midway,below] {$t'$} ;
\draw (1,2) node {$\times$} ;
\draw (2,1) node {$\times$} ;
\draw (3,4) node {$\times$} ;
\draw (3.5,0) node {$\times$} ;
\draw (4,1) node {$\times$} ;
\draw (4.5,2) node {$\times$} ;
\draw (5.5,3) node {$\times$} ;
\draw (6,1) node {$\times$} ;
\draw (6.5,3) node {$\times$} ;
\draw (8,0) node {$\times$} ;
\draw[very thick] (8.5,3)--(5.5,3)--(5.5,1)--(4,1)--(4,2)--(2.5,2) ;
\draw (5.5,1.5) node{$\blacktriangledown$};
\draw (7.5,3) node{$\blacktriangleleft$};
\draw (7,3) node[above]{$\Gamma$};
\end{tikzpicture}
\caption{Illustration of a dual path $\Gamma$ of length $t'$ starting at $(x,t)$ for $d=1$ and $\rho=2$. 
Each horizontal line represents the timeline 
of a site of $\mathds{Z}$, the $\times$ representing the clock rings. $\Gamma$ is the thick polygonal line; 
it starts at $t$ and ends at $t-t'$. It can jump only when there is a clock ring, and never at a distance 
greater than $\rho=2$.}
\label{fig_dual_paths}
\end{figure}

We denote $\mathcal{D}(x,t,t')$ the (random) set of all dual paths of length $t'$ starting from $(x,t)$.
A dual path $\Gamma \in \mathcal{D}(x,t,t')$ is 
called an \emph{activated path} if it ``encounters a point at which both processes are at 0'', 
i.e. if there exists $s \in [0,t']$ such that $\eta_{t-s}(\Gamma(s))=\tilde{\eta}_{t-s}(\Gamma(s))=0$.
The set of all activated paths in $\mathcal{D}(x,t,t')$ is called $\mathcal{A}(x,t,t')$. We have the 

\begin{lemma}\label{lemma_activated_paths}
For any $x \in \mathds{Z}^d$ and $t > 0$, if $\eta_t(x) \neq \tilde{\eta}_t(x)$, then 
for all $0 \leq t' \leq t$, $\mathcal{A}(x,t,t')\neq\mathcal{D}(x,t,t')$.
\end{lemma}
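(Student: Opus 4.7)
The plan is to prove the contrapositive by constructing, in the case $\eta_t(x) \neq \tilde{\eta}_t(x)$, an explicit element of $\mathcal{D}(x,t,t') \setminus \mathcal{A}(x,t,t')$ that follows the ``discrepancy'' between the two coupled processes backward in time. The path will be built inductively while maintaining the invariant that $\eta_{t-s}(\Gamma(s)) \neq \tilde{\eta}_{t-s}(\Gamma(s))$ for every $s \in [0,t']$; this invariant immediately implies that $\Gamma$ is not activated, since two distinct values in $\{0,1\}$ cannot both equal $0$.

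I would start at $\Gamma(0)=x$, where the invariant holds by hypothesis, and extend the path backward in real time. As long as no clock ring occurs at the current site $y$, both $\eta(y)$ and $\tilde{\eta}(y)$ are constant, so the invariant persists automatically. At each clock ring of $y$ at some time $u$, I would examine the updates of the two processes, distinguishing three cases. If neither constraint at $y$ is satisfied at time $u^-$, neither process updates, the discrepancy at $y$ persists through $u$, and the path stays at $y$. If both constraints are satisfied at $u^-$, both processes are updated to the same value dictated by the clock type, forcing $\eta_u(y)=\tilde{\eta}_u(y)$, which contradicts the invariant just after $u$; this case therefore never arises. The remaining case is that exactly one constraint is satisfied, say $\eta$'s via some rule $X \in \mathcal{U}$, so that $(\eta_{u^-})_{y+X}=0$. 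Then the failure of the $\tilde{\eta}$-constraint at every rule, applied in particular to $X$, gives some $z \in y+X$ with $\tilde{\eta}_{u^-}(z) = 1$, while $(\eta_{u^-})_{y+X}=0$ forces $\eta_{u^-}(z)=0$. Since $\|z-y\|_\infty \leq \rho$ by definition of the range, jumping to $z$ at dual time $t-u$ is legal and restores the invariant at the new position.

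Iterating this procedure yields a right-continuous trajectory that obeys the jumping constraints of a dual path, and the invariant holding at every $s \in [0,t']$ guarantees it is not activated. The main technical point is to verify that the procedure terminates in finitely many jumps, so that $\Gamma$ is indeed an element of $\mathcal{D}(x,t,t')$ in the sense of the definition (which requires a finite subdivision $s_0 < s_1 < \dots < s_n$). This can be obtained from the standard almost-sure properties of the Harris graphical construction recalled from \cite{Swart2017}: on the full-measure event on which $\eta_t(x)$ and $\tilde{\eta}_t(x)$ are well-defined, their values are measurable with respect to the clock rings in a finite ``dependence region'' of $\mathds{Z}^d \times [0,t]$, and the discrepancy trajectory constructed above remains inside this region, which contains only finitely many clock rings. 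The resulting $\Gamma$ is thus a valid non-activated dual path, which proves the lemma.
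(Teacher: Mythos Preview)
Your proof is correct and follows essentially the same approach as the paper's sketch: both construct a dual path by tracking a discrepancy site backward in time, jumping at the clock ring where the discrepancy would otherwise be destroyed to a site in $y+X$ where the two processes still disagree, and concluding that such a path is never activated. Your case analysis (by whether zero, one, or two constraints are satisfied) is a slightly more explicit rendering of the paper's phrasing (``go backwards until the processes agree, then jump''), and your remark on finiteness of the number of jumps via the locality of the Harris construction fills in a detail the paper leaves to the reference \cite{Mountford_FA1f}.
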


\begin{proof}[Sketch of proof.] The proof is the same as for lemma 1 of \cite{Mountford_FA1f}, apart from the 
fact that if the path is at $y$, it does not necessarily jump to a neighbor of $y$, but to an element 
of $y+X$, $X \in \mathcal{U}$. The idea of the proof is to start a dual path at $(x,t)$, 
where the two processes disagree, and, staying at $x$, to go backwards in time until the 
processes agree at $x$. At this time, there was an update at $x$ in one process 
but not in the other, hence an update rule $x+X$ that was full of zeroes in one process but not in the other, 
thus a site at distance at most $\rho$ of $x$ at which the two processes disagree. We 
jump to this site and continue to go backwards. This construction yields a dual path along which the 
two processes disagree, hence they can not be both at zero, so the path is not activated.
\end{proof}

Lemma \ref{lemma_activated_paths} implies that to prove proposition \ref{prop_conv_1site} 
hence theorem \ref{thm_convergence}, it is enough to prove

\begin{proposition}\label{prop_all_paths_activated}
For any $q' \in ]0,1]$, there exists $q_0=q_0(\mathcal{U},q') \in [0,1[$ 
such that for any $q \in [q_0,1]$, there exist 
two constants $c_2=c_2(\mathcal{U},q')>0$ and $C_2=C_2(\mathcal{U},q')>0$ such that 
for any $x \in \mathds{Z}^d$, $t \in [0,+\infty[$, there exists $0 \leq t' \leq t$ 
such that $\mathds{P}_{q',q}(\mathcal{A}(x,t,t') \neq\mathcal{D}(x,t,t')) \leq C_2 e^{-c_2t}$.
\end{proposition}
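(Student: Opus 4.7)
The plan is to choose $t' = t$ (or a fixed positive fraction of $t$, to be tuned later) and to bound the probability that some dual path in $\mathcal{D}(x,t,t')$ fails to be activated. A naive union bound over $\mathcal{D}(x,t,t')$ is hopeless: a path can jump at every clock ring it encounters, with roughly $(2\rho+1)^d$ possible destinations, so $|\mathcal{D}(x,t,t')|$ is typically exponential in $t'$. The remedy, developed in section~\ref{sec_codings}, is to introduce codings that group dual paths into equivalence classes by recording only a compressed ``skeleton'' (sequences of times, rough displacements, and clock-ring labels). Provided the cardinality of the set of codings has exponential growth rate smaller than the per-coding decay rate, a union bound over codings reduces Proposition~\ref{prop_all_paths_activated} to Proposition~\ref{prop_bound_single_coding}: an estimate of the probability that the dual paths associated to a single coding are all non-activated.

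For the single-coding estimate I would exploit supercriticality. By Definition~\ref{def_universality_classes}, there is an open hemisphere $C \subset S^{d-1}$ free of stable directions, so every $u \in C$ admits an update rule $X \in \mathcal{U}$ with $X \subset \mathds{H}_u$. Combined with Proposition~\ref{prop_Bollobas}, this geometric property allows the KCM to push a finite seed of $0$s in a favorable direction at positive speed. Using this, one builds an auxiliary oriented percolation process (section~\ref{sec_aux_proc}) whose ``open'' sites are space-time boxes in which, with high probability, both $\eta$ and $\tilde{\eta}$ can be simultaneously driven to $0$. Since the initial laws $\nu_{q'}$ and $\nu_q$ are product Bernoulli with $q' > 0$ and $q$ close to $1$, every box contains a seed of $0$s with probability tending to $1$, and the supercritical propagation spreads this seed. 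The resulting oriented percolation is supercritical for $q$ close enough to $1$, and any dual path associated to the coding that crosses an open cluster is automatically activated; standard supercritical oriented percolation estimates then yield the required $e^{-c_2 t}$ bound.

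The main obstacle is the joint control of the coupled processes $\eta$ and $\tilde{\eta}$: they share the Harris clock rings and are therefore far from independent, while the activation event $\{\eta_{t-s}(\Gamma(s)) = \tilde{\eta}_{t-s}(\Gamma(s)) = 0\}$ demands both of them to hit $0$ simultaneously at the same space-time point. The ``open-box'' event in the auxiliary percolation must therefore be designed to depend only on clock rings inside a bounded localized region and yet to force both processes to be at $0$ there. The seeds must be harvested from the initial data, where $\eta$ and $\tilde{\eta}$ are independent, and the favorable direction from supercriticality must be used to funnel these seeds into the space-time window needed by any coding. The trade-off between the decay rate of the per-coding bound and the number of codings is what forces $q_0$ close to $1$ and dictates the precise choice of $t'$.
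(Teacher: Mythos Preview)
Your sketch is correct and matches the paper's strategy: reduce via codings (section~\ref{sec_codings}) to a per-coding estimate (Proposition~\ref{prop_bound_single_coding}), and prove the latter using the auxiliary oriented percolation built from the supercritical droplet of Proposition~\ref{prop_Bollobas}, with seeds taken from the independent initial configurations near time~$0$. One point worth sharpening: the paper takes $t'=t/K$, not $t'=t$; this makes the coding record only $\lfloor t/K^2\rfloor+1$ positions, so the number of codings grows like $(\beta K)^{(d+1)t/K^2}$ while the per-coding bound decays like $e^{-c_4 t/K}$, and the extra factor of $K$ in the exponent is exactly what lets the decay beat the entropy for large $K$ --- your initial guess $t'=t$ would leave both rates at order $t/K$ and the balance would fail, though your parenthetical ``fixed positive fraction, to be tuned'' already anticipates this.
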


The remainder of the article will be devoted to the proof of proposition \ref{prop_all_paths_activated}. 

\section{Codings}\label{sec_codings}

This section is devoted to the reduction of the proof of proposition \ref{prop_all_paths_activated} 
(hence of theorem \ref{thm_convergence}) to the simpler proof of proposition \ref{prop_bound_single_coding}, 
via the use of \emph{codings}. The idea is the following: in order to prove 
proposition \ref{prop_all_paths_activated}, it is enough to show that along each dual path, the two processes 
are at zero at one of the discrete times $0$, $K$, $2K$, etc. hence we only need to consider the positions 
of the path at these times, which will make up the coding of the path.
Let $K \geq 2$ and $t \geq K$. A coding is a sequence 
$(y_k)_{k \in \{0,\dots,\lfloor\frac{t}{K^2}\rfloor\}}$ of sites in $\mathds{Z}^d$. 
Moreover, for $x \in \mathds{Z}^d$ and $\Gamma \in \mathcal{D}(x,t,\frac{t}{K})$, 
the coding $\bar{\Gamma}$ of $\Gamma$ is the sequence 
$\{\Gamma(kK)\}_{k\in\{0,\dots,\lfloor\frac{t}{K^2}\rfloor\}}$. 
If $\gamma = (y_k)_{k \in \{0,\dots,\lfloor\frac{t}{K^2}\rfloor\}}$ is a coding, we define the event 
$G(\gamma) = \left\{\exists k\in\left\{0,\dots,\left\lfloor\frac{t}{K^2}\right\rfloor\right\}, 
\eta_{t-kK}(y_k) = \tilde{\eta}_{t-kK}(y_k)=0\right\}$. 
If $G(\bar{\Gamma})$ is satisfied, $\Gamma$ is an activated path. 

Therefore, to prove proposition \ref{prop_all_paths_activated} hence theorem \ref{thm_convergence}, 
it is enough to prove 

\begin{proposition}\label{prop_paths_to_codings}
For any $q' \in ]0,1]$, there exists $q_0=q_0(\mathcal{U},q') \in [0,1[$ 
such that for any $q \in [q_0,1]$, there exist 
two constants $c_3=c_3(\mathcal{U},q')>0$ and $C_3=C_3(\mathcal{U},q')>0$ 
and a constant $K = K(\mathcal{U},q') \geq 2$ such that 
for any $x \in \mathds{Z}^d$ and $t \geq 2K^2$, $\mathds{P}_{q',q}(\exists \Gamma \in 
\mathcal{D}(x,t,\frac{t}{K}),G(\bar{\Gamma})^c) \leq C_3 e^{-c_3t}$.
\end{proposition}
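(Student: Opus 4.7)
The plan is to prove Proposition \ref{prop_paths_to_codings} via a union bound over codings, combined with Proposition \ref{prop_bound_single_coding} applied to each individual coding; the present section's job is therefore to control the (random) number of codings that can arise from dual paths in $\mathcal{D}(x,t,t/K)$.

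For the counting, I would proceed blockwise, exploiting the fact that the Poisson clock rings in disjoint time intervals are independent. For $k \in \{0,1,\dots,n-1\}$ with $n=\lfloor t/K^2\rfloor$, let $R_k(y)$ denote the set of sites reachable from $y$ by a dual sub-path of length $K$ that uses only clock rings in $[t-(k+1)K,t-kK]$. By translation invariance each $R_k(y)$ has the law of $R_0(0)$, and the families $(R_k(\cdot))_k$ are mutually independent. Since any coding $\bar\Gamma=(y_0,\dots,y_n)$ satisfies $y_0=x$ and $y_{k+1}\in R_k(y_k)$, the total number of codings is bounded by $\prod_{k=0}^{n-1}\sup_{y}|R_k(y)|$ (the supremum over the sites $y$ actually reachable at step $k$). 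Standard exponential-tail bounds for the size of a Poisson-driven exploration cluster yield, for any $K$, a threshold $M=M(\mathcal{U},K)$ and a constant $c_4=c_4(\mathcal{U},K)>0$ such that the event $\mathcal{G}$ that $|R_k(y)|\le M$ for every $k$ and every relevant $y$ satisfies $\mathds{P}_{q',q}(\mathcal{G}^c)\le C_4 e^{-c_4 t}$, and on $\mathcal{G}$ the number of codings produced by paths in $\mathcal{D}(x,t,t/K)$ is at most $M^n = \exp((\log M/K^2)\,t)$.

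Combining the two ingredients,
\[
\mathds{P}_{q',q}\bigl(\exists\,\Gamma\in\mathcal{D}(x,t,t/K),\;G(\bar\Gamma)^c\bigr) \;\leq\; \mathds{P}_{q',q}(\mathcal{G}^c) \;+\; \sum_{\gamma}\mathds{P}_{q',q}(G(\gamma)^c),
\]
where the sum ranges over the at most $M^n$ codings appearing on $\mathcal{G}$. Proposition \ref{prop_bound_single_coding} is expected to furnish a per-coding bound of the form $C\,e^{-\tilde c(q)\,t}$ with $\tilde c(q)\to\infty$ as $q\to 1$. Picking first $K$ large enough that $\log M/K^2$ is negligible compared with the prospective decay rate $\tilde c(q)$, and then $q_0$ close enough to $1$ that $\tilde c(q)$ dominates, one absorbs the entropic factor $M^n$ into the exponential decay and obtains the announced bound $C_3 e^{-c_3 t}$.

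The combinatorial bookkeeping above is essentially routine (though it must be executed with some care, since within a single time-block the random variables $|R_k(y)|$, $y\in\mathds{Z}^d$, are correlated through the shared clock rings). The real obstacle lies downstream in Proposition \ref{prop_bound_single_coding}: one must show that for a fixed space-time sequence $\{(y_k,t-kK)\}_{k=0}^n$ it is exponentially unlikely that at \emph{every} one of the $n+1$ checkpoints at least one of the two coupled processes fails to display a $0$. This is where the supercriticality of $\mathcal{U}$ enters: the open hemisphere of unstable directions lets zeros propagate ballistically along a favored direction, and the oriented percolation of sections \ref{sec_aux_proc}--\ref{sec_preuve_codings} is meant to convert this into a uniform positive probability --- in each block of length $K$, and with enough independence across blocks --- that both processes simultaneously exhibit a $0$ at the prescribed site, yielding the required exponential decay.
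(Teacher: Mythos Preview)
Your reduction is the same one the paper uses: split off a bad event on which paths are ``too busy,'' bound the number of admissible codings, and then union-bound using Proposition~\ref{prop_bound_single_coding}. Two points deserve correction.

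First, your union bound $\sum_\gamma \mathds{P}_{q',q}(G(\gamma)^c)$ over ``the at most $M^n$ codings appearing on $\mathcal{G}$'' is not rigorous as written: the set of codings that actually occur is random and depends on the same clock rings that enter $G(\gamma)^c$, so you cannot simply multiply $M^n$ by a uniform per-coding bound. The paper avoids this by introducing a \emph{deterministic} set $C_K^N(x,t)$ of ``reasonable codings'' (sequences $(y_{j_1+\cdots+j_k})$ built from a path of at most $Nt/K$ $\rho$-steps), proving in Lemma~\ref{lemma_long_dual_paths} that $\mathds{P}_{q',q}(\exists\,\Gamma,\ \bar\Gamma\notin C_K^N(x,t))\le e^{-\check c t}$, and in Lemma~\ref{lemma_number_codings} that $|C_K^N(x,t)|\le \lambda(\beta K)^{(d+1)t/K^2}$. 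The union bound is then over the fixed set $C_K^N(x,t)$, which is legitimate. Your $\mathcal G$ and $R_k(y)$ can be made to work, but only by extracting from $\mathcal G$ a deterministic containing set --- which is exactly $C_K^N(x,t)$.

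Second, your anticipated shape of the per-coding bound is wrong, and this matters for how the parameters are chosen. Proposition~\ref{prop_bound_single_coding} gives $\mathds{P}_{q',q}(G(\gamma)^c)\le C_4\,e^{-c_4 t/K}$ with $c_4=c_4(\mathcal U,q')$ \emph{independent of $q$}; there is no $\tilde c(q)\to\infty$. Pushing $q\to 1$ does not improve the rate --- the only role of $q_0$ is that Proposition~\ref{prop_bound_single_coding} requires $q\ge q_K$. The entropy--energy balance is therefore
\[
|C_K^N(x,t)|\cdot C_4 e^{-c_4 t/K}
\;\le\;
\lambda C_4\exp\!\Bigl(\tfrac{t}{K}\bigl((d+1)\tfrac{\ln(\beta K)}{K}-c_4\bigr)\Bigr),
\]
and one wins by choosing $K=K(\mathcal U,q')$ large enough that $(d+1)\ln(\beta K)/K<c_4$; then $q_0=q_K$. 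Your two-step ``first $K$, then $q_0$ to make $\tilde c(q)$ dominate'' is not the mechanism at play.
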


Proposition \ref{prop_paths_to_codings} holds only for $t$ greater than a constant, but 
this is enough, since we only have to enlarge $C_3$ to obtain a bound valid for all $t$. 

In order to prove proposition \ref{prop_paths_to_codings}, we will define a set $C_K^N(x,t)$ 
of ``reasonable codings'' and prove that the probability that there exists a dual path whose coding is not 
in $C_K^N(x,t)$ decays exponentially in $t$ (lemma \ref{lemma_long_dual_paths}). 
Then we will count the number of codings in $C_K^N(x,t)$ 
(lemma \ref{lemma_number_codings}). Therefore it will be enough to give a bound on 
$\mathds{P}_{q',q}(G(\gamma)^c)$ for any $\gamma \in C_K^N(x,t)$ to prove proposition 
\ref{prop_paths_to_codings} hence theorem \ref{thm_convergence}. Such a bound is stated 
in proposition \ref{prop_bound_single_coding} and will be proven in section \ref{sec_preuve_codings}.

For any constant $N > 0$, for any $K \geq 2$, $x \in \mathds{Z}^d$ and $t \geq K$, 
the set $C_K^N(x,t)$ of ``reasonable codings'' is defined as the set of 
$(y_{j_1+\cdots+j_k})_{k \in \{0,\dots,\lfloor\frac{t}{K^2}\rfloor\}}$ where 
$(y_{i})_{i \in \{0,\dots,I\}}$ is a sequence of sites satisfiying $y_0=x$, $I \leq \frac{Nt}{K}$ and 
$\|y_{i+1}-y_i\|_\infty \leq \rho$ for all $i \in \{0,\dots,I-1\}$ and where 
$j_1,\dots,j_{\lfloor\frac{t}{K^2}\rfloor} \in \mathds{N}$ satisfy $j_1+\cdots+j_{\lfloor\frac{t}{K^2}\rfloor} \leq I$. 
We can now state lemmas \ref{lemma_long_dual_paths} and \ref{lemma_number_codings}, as well 
as proposition \ref{prop_bound_single_coding}. These statements together prove 
proposition \ref{prop_paths_to_codings}.

\begin{lemma}\label{lemma_long_dual_paths}
For any $q' \in [0,1]$, there exists $N=N(\mathcal{U}) > 0$ 
such that for any $K \geq 2$, $q \in [0,1]$, there exists a constant 
$\check{c}=\check{c}(\mathcal{U},K)>0$ such that for all $x \in \mathds{Z}^d$ and $t \geq K$, 
$\mathds{P}_{q',q}(\exists \Gamma \in \mathcal{D}(x,t,\frac{t}{K}), 
\bar{\Gamma} \not \in C_K^N(x,t)) \leq e^{-\check{c}t}$.
\end{lemma}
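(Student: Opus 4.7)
The plan is to observe that the event $\{\bar{\Gamma} \notin C_K^N(x,t)\}$ forces the dual path $\Gamma$ to make many jumps. Write $J(\Gamma)$ for the total number of jumps of $\Gamma$ on $[0,t/K]$, and let $y_0 = x, y_1, \ldots, y_{J(\Gamma)}$ denote the successive positions of $\Gamma$ after each jump. If $J(\Gamma) \leq Nt/K$, then setting $I = J(\Gamma)$ and $j_k = \#\{\text{jumps of } \Gamma \text{ in } ((k-1)K, kK]\}$ for $1 \leq k \leq \lfloor t/K^2 \rfloor$, the sequence $(y_i)_{i=0}^I$ satisfies $\|y_{i+1}-y_i\|_\infty \leq \rho$ by the very definition of a dual path; moreover $\sum_k j_k \leq J(\Gamma) = I$ and $\bar\Gamma_k = \Gamma(kK) = y_{j_1+\cdots+j_k}$, so $\bar{\Gamma} \in C_K^N(x,t)$. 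Contrapositively, it suffices to prove
\[
\mathds{P}_{q',q}\!\left(\exists\,\Gamma \in \mathcal{D}(x,t,t/K) : J(\Gamma) > Nt/K\right) \leq e^{-\check{c}\,t}.
\]

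I would proceed by a first-moment bound. Each element of $\mathcal{D}(x,t,t/K)$ with exactly $I$ jumps is determined by a spatial trajectory $(y_0,\ldots,y_I)$ with $y_0=x$ and $\|y_{i+1}-y_i\|_\infty \leq \rho$, and by a strictly decreasing sequence $t > \tau_1 > \cdots > \tau_I > t - t/K$ with $\tau_i \in \mathcal{P}^0_{y_{i-1}} \cup \mathcal{P}^1_{y_{i-1}}$; this union is a rate-$1$ Poisson process since $q+(1-q)=1$, \emph{independently} of $q$ and $q'$. The number of admissible spatial trajectories is at most $(2\rho+1)^{dI}$. For each fixed trajectory, Campbell's formula (or the factorial-moment identity for Poisson processes, applied site by site) yields
\[
\mathds{E}\bigl[\#\{(\tau_1,\ldots,\tau_I) : \tau_1 > \cdots > \tau_I,\ \tau_i \in \mathcal{P}^0_{y_{i-1}} \cup \mathcal{P}^1_{y_{i-1}}\}\bigr] = \int_{t-t/K < \tau_I < \cdots < \tau_1 < t} \!\!\! d\tau_1 \cdots d\tau_I \;=\; \frac{(t/K)^I}{I!},
\]
and this identity is valid whether or not the trajectory revisits sites, since coincident times have probability zero for a Poisson process. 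Summing over trajectories gives $\mathds{E}[\#\{\Gamma : J(\Gamma) = I\}] \leq ((2\rho+1)^d t/K)^I / I!$.

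A first-moment/Markov bound then controls the probability in the displayed inequality: applying Stirling's inequality $I! \geq (I/e)^I$ together with $I > Nt/K$ gives
\[
\mathds{P}_{q',q}(\exists\,\Gamma : J(\Gamma) > Nt/K) \;\leq\; \sum_{I > Nt/K} \frac{((2\rho+1)^d\, t/K)^I}{I!} \;\leq\; \sum_{I > Nt/K} \left(\frac{e(2\rho+1)^d}{N}\right)^{I}.
\]
Choosing $N := 2e(2\rho+1)^d$, which depends only on $\mathcal{U}$, bounds each summand by $2^{-I}$ and the whole sum by $2\cdot 2^{-Nt/K}$, so one may take $\check{c} = (N\log 2)/K$ after absorbing the constant into the exponential. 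The argument is essentially a direct first-moment computation; I anticipate the only delicate point to be the verification of the factorial-moment identity when the trajectory revisits sites, but this is handled routinely via Campbell's formula and the almost-sure avoidance of the diagonal by a Poisson process, so it is not a genuine obstacle.
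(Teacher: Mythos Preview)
Your proposal is correct and follows essentially the same approach as the paper: both reduce $\bar\Gamma\notin C_K^N(x,t)$ to the event that $\Gamma$ makes more than $Nt/K$ jumps in time $t/K$, and then bound that event by a first-moment (union) argument over spatial trajectories combined with the rate-$1$ Poisson structure of the clock rings. The paper only sketches this step and defers the details to lemma~5 of \cite{Mountford_FA1f}, whereas you spell out the factorial-moment computation explicitly; the content is the same.
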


In the following, $N$ will always be the $N$ given by lemma \ref{lemma_long_dual_paths}. 

\begin{lemma}\label{lemma_number_codings}
There exist constants $\lambda > 0$ and $\beta = \beta(\mathcal{U}) > 0$ such that for any $K \geq 2$, 
$x \in \mathds{Z}^d$ and $t \geq 2K^2$, $|C_K^N(x,t)| \leq \lambda (\beta K)^{(d+1)\frac{t}{K^2}}$.
\end{lemma}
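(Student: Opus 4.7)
The plan is to parametrize each coding by the gap sizes $(j_k)$ and the jumps between consecutive coded positions, then estimate the resulting sum via AM-GM together with the standard bound on the number of compositions.

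Writing $m := \lfloor t/K^2 \rfloor$, a coding in $C_K^N(x,t)$ is a sequence $(z_k)_{k=0}^m$ with $z_0 = x$ whose differences $w_k := z_k - z_{k-1}$ are the displacements of walks of lengths $j_k$ with steps of $\ell^\infty$-norm at most $\rho$, for nonnegative integers $j_1, \ldots, j_m$ satisfying $\sum_k j_k \leq J := \lfloor Nt/K \rfloor$. Conversely, any such data can be realized (pad with stationary steps). For fixed $(j_k)$, each $w_k$ lies in the $\ell^\infty$-ball of radius $j_k \rho$ in $\mathds{Z}^d$, which contains $(2 j_k \rho + 1)^d$ points, so
\[
  |C_K^N(x,t)| \;\leq\; \sum_{\substack{j_1,\ldots,j_m \geq 0 \\ j_1+\cdots+j_m \leq J}} \prod_{k=1}^m (2 j_k \rho + 1)^d.
\]

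To bound this sum I would use the elementary inequality $2 j_k \rho + 1 \leq (2\rho+1)(j_k+1)$ and then apply AM-GM to obtain
\[
  \prod_{k=1}^m (j_k+1)^d \;\leq\; \left(\frac{J+m}{m}\right)^{dm}
\]
for any composition with $\sum j_k \leq J$. The number of such compositions is $\binom{J+m}{m} \leq (e(J+m)/m)^m$. The hypothesis $t \geq 2K^2$ gives $m \geq t/(2K^2)$, so $J/m \leq 2NK$ and $(J+m)/m \leq 3NK$. Multiplying the three factors collapses the estimate into the form $[c\, (2\rho+1)^d N^{d+1}]^m \cdot K^{(d+1)m}$ for some absolute constant $c$, and the lemma follows with $\beta := (c(2\rho+1)^d N^{d+1})^{1/(d+1)}$ (the exponent $(d+1)m$ can be replaced by $(d+1)t/K^2$ since $\beta K \geq 1$ and $t/K^2 \geq m$, so one may even take $\lambda = 1$).

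The main obstacle is getting the correct power of $K$. A naive estimate that simply counts walks of length up to $J \approx NKm$ would give $(2\rho+1)^{dJ} = (\mathrm{const})^{Km}$, placing $K$ in the base of an exponential in $m$ rather than in the exponent. The AM-GM step is what collapses the product over the $m$ coding intervals into a single factor of order $K^{dm}$, while the count of compositions contributes the additional factor $K^m$, together yielding exactly the desired $K^{(d+1)m}$ (with the $N^{d+1}$ and $(2\rho+1)^d$ pieces absorbed into $\beta$).
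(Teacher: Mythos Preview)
Your proof is correct and follows essentially the same approach as the paper: both parametrize codings by the jump counts $(j_k)$, bound the number of such tuples by a binomial coefficient, and control the product $\prod(2\rho j_k+1)^d$ via AM--GM together with $m\geq t/(2K^2)$. The only differences are cosmetic: you bound $2\rho j_k+1\leq(2\rho+1)(j_k+1)$ before applying AM--GM, and you use the clean inequality $\binom{n}{k}\leq(en/k)^k$ directly, which lets you take $\lambda=1$ where the paper carries a Stirling constant.
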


\begin{proposition}\label{prop_bound_single_coding}
For any $q' \in [0,1]$, there exists a constant $K_0=K_0(\mathcal{U}) \geq 2$ such that 
for any $K \geq K_0$, there exists $q_K \in [0,1[$ such that for any $q \in [q_K,1]$, 
there exist two constants $c_4=c_4(\mathcal{U},q')>0$ and $C_4=C_4(\mathcal{U},K)>0$ such that 
for any $x \in \mathds{Z}^d$, $t \geq K$ and $\gamma \in C_K^N(x,t)$, 
$\mathds{P}_{q',q}(G(\gamma)^c) \leq C_4 e^{-c_4\frac{t}{K}}$.
\end{proposition}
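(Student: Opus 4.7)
My plan is, for each index $k \in \{0, \dots, \lfloor t/K^2 \rfloor\}$ of the coding $\gamma=(y_k)_k$, to construct a ``local good event'' $A_k$ depending only on the Harris clocks and initial configurations inside a space-time region of the form $\Lambda_k \times [t-kK-K/2, t-kK]$, with $\Lambda_k \subset \mathds{Z}^d$ a box of side $\Theta(K)$ centred at $y_k$, such that (i) $A_k$ implies $\eta_{t-kK}(y_k)=\tilde\eta_{t-kK}(y_k)=0$, (ii) $\mathds{P}_{q',q}(A_k^c) \leq e^{-cK}$ for some $c=c(\mathcal{U},q')>0$ when $q$ is close enough to $1$, and (iii) the $A_k$ are mutually (nearly) independent. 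Given (i)--(iii), since $G(\gamma)^c \subset \bigcap_k A_k^c$, one obtains
\[
\mathds{P}_{q',q}(G(\gamma)^c) \leq \prod_{k=0}^{\lfloor t/K^2 \rfloor} \mathds{P}_{q',q}(A_k^c) \leq e^{-cK \lfloor t/K^2\rfloor} \leq C_4\, e^{-c_4 t/K},
\]
with $c_4=c$ and $C_4$ absorbing boundary corrections. The fact that the total back-step of the coding, $\lfloor t/K^2\rfloor \cdot K = t/K$, is much smaller than $t$ will be crucial: it leaves room for an inductive construction below time $t-kK-K/2$ that is still strictly inside $[0,t]$.

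To define $A_k$ I would use the auxiliary oriented percolation process of section \ref{sec_aux_proc}. Fix an open hemisphere $C \subset S^{d-1}$ containing no stable direction, and tile the ``past cone'' of $(y_k,t-kK)$ into blocks of side $\Theta(K)$ aligned with the direction of $C$. A block is declared \emph{open} when its Poisson clocks satisfy a deterministic criterion guaranteeing that a $0$ present on one side of the block can be propagated across it via an update rule $X \in \mathcal{U}$ with $X \subset \mathds{H}_u$ for some $u \in C$ (such an $X$ exists because $C$ is fully unstable). A single block is open with probability tending to $1$ as $q \to 1$, so for $q$ close enough to $1$ the percolation is deeply supercritical. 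Then $A_k$ is the conjunction of (a) enough open blocks in $\Lambda_k \times [t-kK-K/2,\,t-kK]$ to propagate a $0$ from the bottom face to $(y_k, t-kK)$ in both $\eta$ and $\tilde\eta$, and (b) the existence of at least one site in $\Lambda_k$ at which $\eta_{t-kK-K/2}$ and $\tilde\eta_{t-kK-K/2}$ are simultaneously $0$. Part (a) fails with probability $\leq e^{-cK}$ by standard exponential decay of dual connections deep in the supercritical phase of oriented percolation; part (b) fails with probability at most $(1-qq')^{\Theta(K^d)} \leq e^{-cK}$, where the density of zeros of $\eta_{t-kK-K/2}$ (not a priori product) is controlled by the percolation argument applied one level deeper.

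Condition (iii) is where I expect the main obstacle. Because the time intervals $[t-kK-K/2, t-kK]$ are pairwise disjoint (with gap $K/2$ between consecutive ones), the Poisson-clock parts of the $A_k$ are independent. The residual dependence comes from the values of $\eta$ and $\tilde\eta$ at the base times $t-kK-K/2$, which in principle know the whole past. I would resolve this by iterating the construction: the ``seed'' required at the base of $\Lambda_k$ is itself produced by a nested good event one further window deeper in the cone, and so on recursively. The chain terminates either at time $0$, where only the product-Bernoulli initial laws matter, or after $\Theta(t/K^2)$ nesting levels whose total time extent is $O(t/K) \ll t$. The condition $K \geq K_0(\mathcal{U})$ is used to make sure that the range $\rho$, the side of a percolation block, the angular width of $C$ and the maximum jump $\|y_{k+1}-y_k\|_\infty \leq NK\rho$ (inherited from $\gamma \in C_K^N(x,t)$) are all compatible, so that the nested regions fit together without uncontrolled overlap and the telescoping argument yields genuine independence (up to an additional exponentially small union-bound term absorbed into $C_4$).
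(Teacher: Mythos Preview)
Your plan has a genuine gap at step (b): you need the configuration $\eta_{t-kK-K/2}$ restricted to $\Lambda_k$ to contain a suitable all-zero seed with probability at least $1-e^{-cK}$. For $\tilde\eta$ this is fine, since stationarity gives law $\nu_q$ with $q$ close to $1$. But for $\eta$, the law at an intermediate time $s\in(0,t)$ is neither product nor a priori close to $\nu_q$; establishing even a crude lower bound on its zero density is precisely the out-of-equilibrium statement the whole paper is trying to prove, so invoking it here is circular. Your recursive fix does not close either: to reach time~$0$ (where $\eta_0\sim\nu_{q'}$ is product) from level $t-kK-K/2$ in steps of size $K/2$ takes $\Theta(t/K)$ nesting levels, not $\Theta(t/K^2)$; and once each $A_k$ depends on the initial configuration in a region of diameter $\Theta(t/K)$ at time~$0$, the events $A_k$ for different $k$ share that region (the $y_k$'s all lie within $\rho N t/K$ of $x$ by the definition of $C_K^N(x,t)$), so the independence claimed in (iii) is lost and the product bound $\prod_k \mathds{P}_{q',q}(A_k^c)$ is unjustified.

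The paper avoids this trap by \emph{not} asking each index $k$ to succeed with high probability. Instead it runs, for each $k$, the auxiliary oriented percolation $\zeta^{y_k,k}$ backwards all the way to time $t-\lfloor t/K\rfloor K\in[0,K)$, and defines $k(\gamma)$ as the \emph{first} $k$ for which this process survives ($\tau^{y_k,k}=+\infty$). The bound on $\mathds{P}_{q',q}(G(\gamma)^c)$ then splits into three pieces: (1) $k(\gamma)$ fails to exist---controlled by a renewal argument over the disjoint time windows in which successive processes die (Lemma~\ref{lemma_percolation_structure}); (2) $k(\gamma)$ exists but the surviving process hits fewer than $t/(6K)$ sites at the bottom---a large-deviation estimate for supercritical oriented percolation (Lemma~\ref{lemma_percolation_use}); (3) none of those $\Theta(t/K)$ bottom rectangles is all-zero in both $\eta$ and $\tilde\eta$ at time $t-\lfloor t/K\rfloor K$---here, and only here, the product laws $\nu_{q'},\nu_q$ are used, and the rectangles are genuinely independent (Lemma~\ref{lemma_wonderful_rectangles}). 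The structural difference from your plan is that the initial data is touched exactly once, by a single surviving percolation path, rather than once per coding index.
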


We are now going to prove lemmas \ref{lemma_long_dual_paths} and \ref{lemma_number_codings}. 
After that, it will suffice to prove proposition \ref{prop_bound_single_coding} to prove 
theorem \ref{thm_convergence}.

\begin{proof}[Sketch of proof of lemma \ref{lemma_long_dual_paths}.]
This can be proven with the argument of the lemma 5 of \cite{Mountford_FA1f}; the idea is that if there exists 
$\Gamma \in \mathcal{D}(x,t,\frac{t}{K})$ with $\bar{\Gamma} \not \in C_K^N(x,t)$, 
there are so many clock rings that the probability becomes very small. Indeed, let us say 
$\Gamma$ visits the sites $y_0=x,y_1,\dots,y_{j_1}$ in the time interval $[0,K]$, then the sites 
$y_{j_1},\dots,y_{j_1+j_2}$  in the time interval $[K,2K]$, etc. until the sites 
$y_{j_1+\cdots+j_{\lfloor\frac{t}{K^2}\rfloor}},\dots,y_{j_1+\cdots+j_{\lfloor\frac{t}{K^2}\rfloor+1}}$ 
in the time interval $[\lfloor\frac{t}{K^2}\rfloor K,(\lfloor\frac{t}{K^2}\rfloor+1)K]$. 
Then the coding of $\Gamma$ is $\bar{\Gamma} = (y_{j_1+\cdots+j_k})_{k \in \{0,\dots,\lfloor\frac{t}{K^2}\rfloor\}}$, 
hence $\bar{\Gamma} \not \in C_K^N(x,t)$ implies $j_1+\cdots+j_{\lfloor\frac{t}{K^2}\rfloor+1} > \frac{Nt}{K}$. 
It yields that $\Gamma$ visits more than $\frac{Nt}{K}$ sites in 
a time $\frac{t}{K}$, and there must be successive clock rings at these sites. The 
proof of lemma 5 of \cite{Mountford_FA1f} yields that we can choose $N$ large enough 
depending on $\rho$, hence on $\mathcal{U}$, such that the probability of this event is 
at most $e^{-\check{c}t}$ with $\check{c}=\check{c}(\mathcal{U},N,K) = \check{c}(\mathcal{U},K) > 0$.
\end{proof}

To prove lemma \ref{lemma_number_codings}, we need the following classical combinatorial result, 
which will also be used in the proof of lemma \ref{lemma_percolation_structure}. 

\begin{lemma}\label{lemma_binomial_coeffs}
For any $I,J \in \mathds{N}$, $\binom{I}{I} + \binom{I+1}{I} + \cdots + \binom{I + J}{I} = \binom{I+J+1}{I+1}$.
Moreover, for any $I,J \in \mathds{N}$, $|\{(j_1,\dots,j_I)\in\mathds{N}^I \,|\, j_1+\cdots+j_I=J\}| = 
\binom{I+J-1}{I-1}$.
\end{lemma}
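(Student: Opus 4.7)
The plan is to prove the two identities separately; both are classical and the main effort is just to pick the right induction / bijection. I do not expect any real obstacle here, since these are standard combinatorial facts; the only thing to watch is to be consistent with the convention $\binom{n}{k}=0$ when $k>n$ or $k<0$, and to handle the edge case $I=0$ of the second identity (in which case $J$ must be $0$ and the set has exactly one element, the empty tuple, matching $\binom{-1}{-1}=1$ under the usual convention, or one can just assume $I\geq 1$).

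For the first identity (the hockey stick identity) I would proceed by induction on $J$, with $I$ fixed. The base case $J=0$ reads $\binom{I}{I}=\binom{I+1}{I+1}=1$, which is immediate. For the inductive step, assuming the identity for $J-1$, I would apply Pascal's rule
\[
\binom{I+J+1}{I+1} = \binom{I+J}{I+1} + \binom{I+J}{I},
\]
and then use the induction hypothesis on the first summand, which gives $\binom{I+J}{I+1}=\binom{I}{I}+\binom{I+1}{I}+\cdots+\binom{I+J-1}{I}$; adding the remaining $\binom{I+J}{I}$ yields the desired sum.

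For the second identity (stars and bars) I would prefer a bijective argument. Given a tuple $(j_1,\dots,j_I)\in\mathds{N}^I$ with $j_1+\cdots+j_I=J$, I encode it as a binary word of length $I+J-1$ consisting of $J$ ``stars'' and $I-1$ ``bars'', where the bars are placed so that the number of stars strictly between the $(k-1)$-th and $k$-th bar equals $j_k$ (with the conventions that the $0$-th bar is the left end and the $I$-th bar is the right end). This map is clearly a bijection between the set in the statement and the set of subsets of $\{1,\dots,I+J-1\}$ of size $I-1$ (the positions of the bars), which has cardinality $\binom{I+J-1}{I-1}$. Alternatively, one can prove this by induction on $I$, summing over the value of $j_I=j\in\{0,\dots,J\}$ and invoking the induction hypothesis together with the first identity to collapse the sum $\sum_{j=0}^{J}\binom{I-2+J-j}{I-2}=\binom{I+J-1}{I-1}$, but the bijective proof is cleaner.
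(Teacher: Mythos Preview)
Your proposal is correct. The paper does not actually prove this lemma at all: it simply cites Jones~\cite{Jones_1996} for the hockey-stick identity and Stanley~\cite{Stanley_enucomb} (weak compositions) for the second part. Your induction via Pascal's rule for the first identity and your stars-and-bars bijection for the second are the standard self-contained arguments, and both are carried out without error; in particular your handling of the base case and the inductive step for the hockey-stick identity is clean. One minor remark: your discussion of the $I=0$ edge case in the second identity is a bit awkward (invoking $\binom{-1}{-1}=1$), but since the paper only ever applies the lemma with $I\geq 1$ (specifically $I=\lfloor t/K^2\rfloor\geq 1$), this is harmless.
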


The proof of the first part of lemma \ref{lemma_binomial_coeffs} can be found 
just before the section 2 of \cite{Jones_1996} and the proof of the second part in 
section 1.2 of \cite{Stanley_enucomb} (weak compositions).

\begin{proof}[Proof of lemma \ref{lemma_number_codings}.]
Let $K \geq 2$, $x \in \mathds{Z}^d$ and $t \geq 2K^2$.
By definition, elements of $C_K^N(x,t)$ have the form $(y_{j_1+\cdots+j_k})_{k \in \{0,\dots,\lfloor\frac{t}{K^2}\rfloor\}}$ 
with $(y_{i})_{i \in \{0,\dots,I\}}$ satisfiying $y_0=x$, $I \leq \frac{Nt}{K}$ and 
$\|y_{i+1}-y_i\|_\infty \leq \rho$ for all $i \in \{0,\dots,I-1\}$, and with 
  $j_1,\dots,j_{\lfloor\frac{t}{K^2}\rfloor} \in \mathds{N}$ satisfying $j_1+\cdots+j_{\lfloor\frac{t}{K^2}\rfloor} \leq I$. 
Therefore, to count the number of elements of $C_K^N(x,t)$, it is enough to count the number 
of possible $(j_k)_{k \in \{1,\dots,\lfloor\frac{t}{K^2}\rfloor\}}$ and the 
number of possible $(y_{j_1+\cdots+j_k})_{k \in \{0,\dots,\lfloor\frac{t}{K^2}\rfloor\}}$ 
given $(j_k)_{k \in \{1,\dots,\lfloor\frac{t}{K^2}\rfloor\}}$.

We begin by counting the number of possible $(j_k)_{k \in \{1,\dots,\lfloor\frac{t}{K^2}\rfloor\}}$. 
We have $j_1+\cdots+j_{\lfloor\frac{t}{K^2}\rfloor} \leq \frac{Nt}{K}$. Moreover, 
by the second part of lemma \ref{lemma_binomial_coeffs}, 
for any integer $0 \leq J \leq \frac{Nt}{K}$, the number of possible sequences of integers 
$(j_k)_{k \in \{1,\dots,\lfloor\frac{t}{K^2}\rfloor\}}$ such that 
$j_1+\cdots+j_{\lfloor\frac{t}{K^2}\rfloor} =J$ is at most $\binom{\lfloor\frac{t}{K^2}\rfloor+J-1}
{\lfloor\frac{t}{K^2}\rfloor-1}$, hence the number of possible 
$(j_k)_{k \in \{1,\dots,\lfloor\frac{t}{K^2}\rfloor\}}$ is at most 
$\sum_{J=0}^{\lfloor\frac{Nt}{K}\rfloor}\binom{\lfloor\frac{t}{K^2}\rfloor+J-1}
{\lfloor\frac{t}{K^2}\rfloor-1} = \binom{\lfloor\frac{t}{K^2}\rfloor+\lfloor\frac{Nt}{K}\rfloor}
{\lfloor\frac{t}{K^2}\rfloor}$ by the first part of lemma \ref{lemma_binomial_coeffs}.
Furthermore $\binom{\lfloor\frac{t}{K^2}\rfloor+\lfloor\frac{Nt}{K}\rfloor}{\lfloor\frac{t}{K^2}\rfloor} 
\leq \frac{(\lfloor\frac{t}{K^2}\rfloor+\lfloor\frac{Nt}{K}\rfloor)^{\lfloor\frac{t}{K^2}\rfloor}}
{(\lfloor\frac{t}{K^2}\rfloor)!} \leq 
\lambda\left(\frac{e(\lfloor\frac{t}{K^2}\rfloor+\lfloor\frac{Nt}{K}\rfloor)}
{\lfloor\frac{t}{K^2}\rfloor}\right)^{\lfloor\frac{t}{K^2}\rfloor} \leq 
\lambda\left(e+e\frac{\lfloor\frac{Nt}{K}\rfloor}{\lfloor\frac{t}{K^2}\rfloor}\right)^{\frac{t}{K^2}}$ 
by the Stirling formula, where $\lambda > 0$ is a constant.
In addition, since $t \geq 2K^2$, $\lfloor\frac{t}{K^2}\rfloor\geq \frac{t}{2K^2}$, hence the 
number of possible $(j_k)_{k \in \{1,\dots,\lfloor\frac{t}{K^2}\rfloor\}}$ is at most 
$\lambda\left(e+e\frac{Nt}{K}\frac{2K^2}{t}\right)^{\frac{t}{K^2}}
= \lambda\left(e+2eKN\right)^{\frac{t}{K^2}} \leq \lambda (3eKN)^{\frac{t}{K^2}}$ as $K \geq 2$ and $N$ is large.

We now fix a sequence $(j_k)_{k \in \{1,\dots,\lfloor\frac{t}{K^2}\rfloor\}}$ and 
count the possible $(y_{j_1+\cdots+j_k})_{k \in \{0,\dots,\lfloor\frac{t}{K^2}\rfloor\}}$. 
We know that $y_0=x$. Moreover, for all $i \in \{0,\dots,j_1+\cdots+j_{\lfloor\frac{t}{K^2}\rfloor}-1\}$, 
$\|y_{i+1}-y_i\|_\infty \leq \rho$, hence for each $k \in \{0,\dots,\lfloor\frac{t}{K^2}\rfloor-1\}$, we have 
$\|y_{j_1+\cdots+j_{k+1}}-y_{j_1+\cdots+j_k}\|_\infty \leq \rho j_{k+1}$, so there are 
at most $(2\rho j_{k+1}+1)^d$ choices for $y_{j_1+\cdots+j_{k+1}}$ given $y_{j_1+\cdots+j_{k}}$. 
Therefore the number of choices for 
$(y_{j_1+\cdots+j_k})_{k \in \{0,\dots,\lfloor\frac{t}{K^2}\rfloor\}}$ is at most 
$\prod_{k=1}^{\lfloor\frac{t}{K^2}\rfloor}(2\rho j_{k}+1)^d$. Moreover, for any 
$n \in \mathds{N}^*$ and any positive $x_1,\dots,x_n$, we have $x_1\dots x_n \leq (\frac{x_1+\cdots+x_n}{n})^n$, 
therefore the number of choices is bounded by 
\[
\left(\frac{\sum_{k=1}^{\lfloor\frac{t}{K^2}\rfloor}(2\rho j_{k}+1)}
{\lfloor\frac{t}{K^2}\rfloor}\right)^{d\lfloor\frac{t}{K^2}\rfloor}
= \left(\frac{2\rho\sum_{k=1}^{\lfloor\frac{t}{K^2}\rfloor}j_{k}+\lfloor\frac{t}{K^2}\rfloor}
{\lfloor\frac{t}{K^2}\rfloor}\right)^{d\lfloor\frac{t}{K^2}\rfloor} 
\leq \left(\frac{2\rho\frac{Nt}{K}+\lfloor\frac{t}{K^2}\rfloor}
{\lfloor\frac{t}{K^2}\rfloor}\right)^{d\frac{t}{K^2}}
\]
since $\sum_{k=1}^{\lfloor\frac{t}{K^2}\rfloor}j_{k} \leq \frac{Nt}{K}$. As $t \geq 2K^2$, 
$\lfloor\frac{t}{K^2}\rfloor\geq \frac{t}{2K^2}$, thus the number of 
choices for $(y_{j_1+\cdots+j_k})_{k \in \{0,\dots,\lfloor\frac{t}{K^2}\rfloor\}}$ 
given $(j_k)_{k \in \{1,\dots,\lfloor\frac{t}{K^2}\rfloor\}}$ 
is bounded by $\left(2\rho\frac{Nt}{K}\frac{2K^2}{t}+1\right)^{d\frac{t}{K^2}} = 
(4\rho NK+1)^{d\frac{t}{K^2}} \leq (5\rho NK)^{d\frac{t}{K^2}}$.
\end{proof}

\section{An auxiliary process}\label{sec_aux_proc}

In order to prove proposition \ref{prop_bound_single_coding}, we need to find a mechanism for the 
zeroes to spread in the KCM process; this mechanism uses novel ideas 
to deal with the complexity of general supercritical models. We begin in section 
\ref{subsec_bootstrap_result} by using the bootstrap percolation 
results of \cite{Bollobas_et_al2015} to find a mechanism allowing the zeroes to spread locally 
(proposition \ref{prop_Bollobas}). Then we use it in section \ref{subsec_def_aux_process} 
to define an auxiliary oriented percolation process which guarantees that if certain 
conditions are met, the KCM process is at zero at a given time (proposition \ref{prop_transfer_zeroes2}). 
Finally, in section \ref{subsec_prop_aux_process} we prove some properties of this auxiliary process 
that we will use in section \ref{sec_preuve_codings}. 

\subsection{Local spread of zeroes}\label{subsec_bootstrap_result}

This is the place where we need the supercriticality of $\mathcal{U}$. Indeed, since $\mathcal{U}$ is supercritical, 
the results of \cite{Bollobas_et_al2015} yield the following proposition (see figure \ref{fig_prop_Bollobas}): 

\begin{proposition}[\cite{Bollobas_et_al2015}]\label{prop_Bollobas}
For $d=1$ or $2$, there exists $u \in S^{d-1}$, a rectangle $R$ of the following form:
\begin{itemize}
  \item if $d=1$, $R = [0,a_1u[ \cap \mathds{Z}$ with $a_1 u \in \mathds{Z}$; 
  \item if $d=2$, $R = ([0,a_1[u+[0,a_2]u^\perp) \cap \mathds{Z}^2$ with $a_1 u \in \mathds{Z}^2$, 
  where $u^\perp$ is a vector orthogonal to $u$,
\end{itemize}
and a sequence of sites $(x_i)_{1 \leq i \leq m}$ in 
$(a_1u+R) \cup (2a_1u+R)$ such that if the sites of $R$ are at zero and 
there are successive 0-clock rings at $x_1,x_2,\dots,x_m$ while there is no 1-clock ring 
in $R \cup \{x_1,\dots,x_m\}$, the sites of $a_1u+R$ are at zero afterwards.
\end{proposition}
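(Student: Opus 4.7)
The plan is to reduce the statement to the droplet-growth machinery developed for bootstrap percolation in \cite{Bollobas_et_al2015}. The key bridge between bootstrap percolation and the KCM is that both share the same local mechanism: a site $y$ can be ``turned to zero'' as soon as all sites of $y + X$ are already at zero for some $X \in \mathcal{U}$, which in the KCM corresponds to a 0-clock ring at $y$, and the resulting zero persists in the absence of a 1-clock ring at $y$. Therefore, any bootstrap infection sequence $x_1, \dots, x_m$ growing from an initially infected set $R$ translates verbatim into a successful cascade of KCM updates, provided the successive 0-clock rings at $x_1, \dots, x_m$ occur while no 1-clock ring disturbs any site of $R \cup \{x_1, \dots, x_m\}$. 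It thus suffices to produce, in the bootstrap framework, a rectangle $R$ of the prescribed shape together with an infection sequence that grows $R$ to cover $a_1 u + R$ using sites at $\ell^\infty$-distance at most $\rho$ of the growing region, so that the sequence automatically lies in $(a_1 u + R) \cup (2 a_1 u + R)$.

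For $d = 1$, supercriticality forces at least one of $u = 1$ or $u = -1$ to be unstable. I would pick such a $u$, say $u = 1$, giving some $X \in \mathcal{U}$ with $X \subset \mathds{Z} \cap \,]-\infty, 0[$. Setting $a_1 = \rho$ and $R = \{0, \dots, a_1 - 1\}$, the sites $a_1, a_1 + 1, \dots, 2 a_1 - 1$ can then be infected in left-to-right order, each via the translate $x_i + X$ of $X$ based at the site $x_i$ being infected; this translate is automatically contained in the union of $R$ and the already-infected sites.

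For $d = 2$, the statement is essentially the growing-droplet construction of \cite{Bollobas_et_al2015}. Supercriticality provides an open hemisphere of unstable directions; one picks a rational direction $u$ well inside this hemisphere and chooses the sides $a_1, a_2$ of the rectangle large enough so that an unstable rule ``in the direction $u$'' handles the interior sites of $a_1 u + R$, while unstable rules in directions close to $\pm u^\perp$ are used to handle the two corner strips of $a_1 u + R$ where a direct $u$-translate would leak out of $R$ in the $u^\perp$ direction. The main obstacle, which is precisely what \cite{Bollobas_et_al2015} carries out in detail, is to choose $a_1$, $a_2$, the rational direction $u$, and the order of infections simultaneously so that at every step the update rule used lies entirely in the already-zero region; this is a careful combinatorial bookkeeping argument rather than a conceptually deep one. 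Once this bootstrap construction is extracted, the proposition follows at once by applying the bootstrap-to-KCM translation described in the first paragraph.
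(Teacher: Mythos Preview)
Your approach is essentially the paper's: translate bootstrap infection sequences into KCM 0-clock-ring cascades, handle $d=1$ by a direct left-to-right sweep using a rule contained in $-\mathds{N}^*$, and defer the $d=2$ construction to \cite{Bollobas_et_al2015}.

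One refinement in the $d=2$ case is worth noting. What \cite{Bollobas_et_al2015} (section~5, in particular lemma~5.5 and figure~5) actually provides is not a rectangle that grows to a translated rectangle, but a polygonal \emph{droplet}, whose sides are determined by a finite set of quasi-stable directions, that grows in direction $u$ to a larger droplet of the same shape. The paper's proof takes this droplet as a black box, enlarges it into a rectangle $R$ containing it (choosing $u$ rational so that one can arrange $a_1 u \in \mathds{Z}^2$), and then runs the droplet growth until the grown droplet covers $a_1 u + R$, checking that this growth stays inside $R \cup (a_1u+R) \cup (2a_1u+R)$. Your description instead sketches a direct rectangle-to-rectangle scheme (interior sites via a rule in direction $u$, corner strips via rules near $\pm u^\perp$); this is morally the same mechanism, but the rules ``close to $\pm u^\perp$'' do not obviously have their supports inside $R$ without the polygonal geometry, so going through the droplet and then enlarging to a rectangle is the cleaner way to extract the statement from \cite{Bollobas_et_al2015} rather than redoing its corner analysis.
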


\begin{figure}
\parbox{0.45\textwidth}{
\begin{center}
\begin{tikzpicture}[scale=0.44]
\draw (-1,0)--(13,0);
\draw [dashed] (-2,0)--(-1,0);
\draw [dashed] (13,0)--(14,0);
\draw (-2,0) node[left]{$\mathds{Z}$} ;
\foreach \i in {-1,0,...,13} \draw (\i,0.2)--(\i,-0.2) ;
\draw (0,0) node [below] {$0$} ;
\draw [ultra thick] (0,0.2)--(0,-0.2) ;
\draw (4,0) node [below] {$a_1 u$} ;
\draw [ultra thick] (4,0.2)--(4,-0.2) ;
\draw (8,0) node [below] {$2a_1 u$} ;
\draw [ultra thick] (8,0.2)--(8,-0.2) ;
\draw (12,0) node [below] {$3a_1 u$} ;
\draw [ultra thick] (12,0.2)--(12,-0.2) ;
\draw[decorate,decoration={brace}] (-0.5,0.2)--(3.5,0.2) node[midway,above]{$R$};
\draw[decorate,decoration={brace}] (3.5,0.2)--(7.5,0.2) node[midway,above]{$a_1u+R$};
\draw[decorate,decoration={brace}] (7.5,0.2)--(11.5,0.2) node[midway,above]{$2a_1u+R$};
\foreach \i in {4,5,...,9} \draw (\i,0) node{$\ast$};
\end{tikzpicture}

$d=1$
\end{center}
}
\parbox{0.45\textwidth}{
\begin{center}
\begin{tikzpicture}[scale=0.45]
\draw [very thin, gray] (-3,-1) grid (10,12);
\draw (0,0) node{$\times$} node[below,fill=white] {$0$};
\foreach \i in {-3,-2,...,10} \draw[very thin, gray, dashed] (\i,-1.5)--(\i,-1);
\foreach \i in {-3,-2,...,10} \draw[very thin, gray, dashed] (\i,12)--(\i,12.5);
\foreach \i in {-1,0,...,12} \draw[very thin, gray, dashed] (-3.5,\i)--(-3,\i);
\foreach \i in {-1,0,...,12} \draw[very thin, gray, dashed] (10,\i)--(10.5,\i);
\draw (-3.5,11) node [left] {$\mathds{Z}^2$} ;
\draw (0,0)--(3,3)--(1,5)--(-2,2)--cycle ;
\draw (3,3)--(1,5)--(4,8)--(6,6)--cycle ;
\draw (6,6)--(4,8)--(7,11)--(9,9)--cycle ;
\draw (-2,2)--(1,5) node [midway,above left,fill=white] {$R$};
\draw (1,5)--(4,8) node [midway,above left,fill=white] {$a_1u+R$};
\draw (4,8)--(7,11) node [midway,above left,fill=white] {$2a_1u+R$};
\draw [>=stealth,<->] (-0.2,-0.2)--(-2.2,1.8) node [midway,below left,fill=white] {$a_2$} ;
\draw [>=stealth,<->] (0.2,-0.2)--(3.2,2.8) node [midway,below right,fill=white] {$a_1$} ;
\draw [>=stealth,->] (7,0)--(7.7,0.7) node [midway,below right,fill=white] {$u$};
\draw (6,0) node [fill=white] {$u^\perp$};
\draw [>=stealth,->] (7,0)--(6.3,0.7);
\draw (1,5) node{$\ast$} ;
\draw (2,4) node{$\ast$} ;
\draw (3,3) node{$\ast$} ;
\draw (2,5) node{$\ast$} ;
\draw (3,4) node{$\ast$} ;
\draw (2,6) node{$\ast$} ;
\draw (3,5) node{$\ast$} ;
\draw (4,4) node{$\ast$} ;
\draw (3,6) node{$\ast$} ;
\draw (4,5) node{$\ast$} ;
\draw (3,7) node{$\ast$} ;
\draw (4,6) node{$\ast$} ;
\draw (5,5) node{$\ast$} ;
\draw (4,7) node{$\ast$} ;
\draw (5,6) node{$\ast$} ;
\draw (4,8) node{$\ast$} ;
\draw (5,7) node{$\ast$} ;
\draw (5,8) node{$\ast$} ;
\draw (6,7) node{$\ast$} ;
\draw (6,8) node{$\ast$} ;
\end{tikzpicture}

$d=2$
\end{center}
}

\caption{Illustration of proposition \ref{prop_Bollobas}. The $\ast$ represent the sites 
$x_1,\dots,x_m$. If we start with the sites of $R$ at zero and there are successive 0-clock rings at $x_1,\dots,x_m$ 
while there is no 1-clock ring in $R \cup \{x_1,\dots,x_m\}$, these clock rings will put $x_1,\dots,x_m$ at 
zero, hence the sites of $a_1u+R$ will be put at zero.}
\label{fig_prop_Bollobas}
\end{figure}
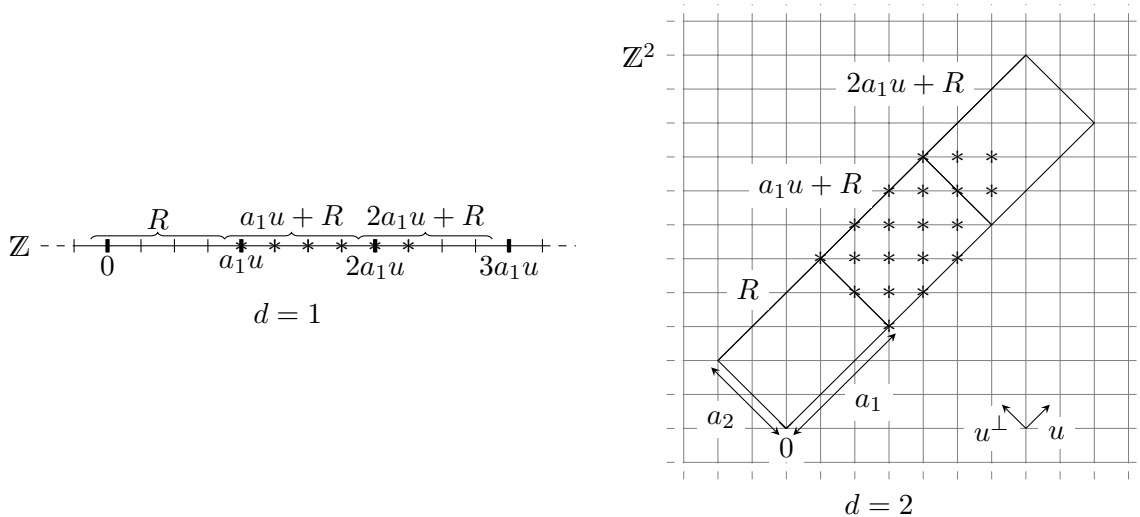

\begin{remark}
For $d \geq 3$, we expect a similar proposition to hold, maybe with $R = [0,a_1[u + \bar{R}$, 
$\bar{R}$ contained in the hyperplane orthogonal to $u$, but we can not prove it because an equivalent of the 
construction of \cite{Bollobas_et_al2015} is not available yet. 
Proving such a construction would be enough to extend our result to any dimension.
\end{remark}

\begin{proof}[Proof of proposition \ref{prop_Bollobas}.]
We begin with the case $d=1$. Since $\mathcal{U}$ is supercritical there exists $u$ 
an unstable direction. Without loss of generality 
we can say that $u = 1$, therefore there exists an update rule $X$ contained in $-\mathds{N}^*$. 
This yields the mechanism illustrated by figure \ref{fig_preuve_prop_Bollobas}(a): if 
$R = \{0,\dots,\ell\}$ is sufficiently large and full of zeroes, $(\ell+1)+X$ is full of zeroes, hence 
if the site $\ell+1$ receives a 0-clock ring, this clock ring puts it at zero. 
Then $(\ell+2)+X$ is full of zeroes, thus if $\ell+2$ receives a 0-clock ring, 
this clock ring puts it at zero. In the same way, if the sites $\ell+3,\dots,2\ell+1$ receive 
successive 0-clock rings, these clock rings will put them 
successively at zero, therefore $\{\ell+1,\dots,2\ell+1\} = (\ell+1) +R$ will be at zero. This yields the result 
with $a_1 = \ell+1$ and $(x_i)_{1 \leq i \leq m} = \ell+1,\ell+2,\dots,2\ell+1$.

We now consider the case $d = 2$. Since $\mathcal{U}$ is supercritical, there exists a semicircle in $S^{1}$ 
that contains no stable direction; we call $u$ its middle. The results of section 5 of \cite{Bollobas_et_al2015} 
(see in particular figure 5 and lemma 5.5 therein) prove that there exists a set of sites, called a 
\emph{droplet}, such that in the bootstrap percolation dynamics, if we start with 
all the sites of the droplet infected, other sites in the direction $u$ can be infected, 
creating a bigger infected droplet with the same shape (see figure \ref{fig_preuve_prop_Bollobas}(b)). 
We can enlarge this droplet into a rectangle $R = [0,a_1[u+[0,a_2]u^\perp$ 
as in figure \ref{fig_preuve_prop_Bollobas}(c); furthermore $u$ can be chosen 
rational\footnote{Indeed, theorem 1.10 of \cite{Bollobas_et_al2015} 
states that the set of stable directions is a finite union of closed intervals 
with rational endpoints, hence the semicircle containing no stable direction can be chosen with rational endpoints.}, 
hence we may enlarge $R$ enough so that $a_1 u \in \mathds{Z}^2$.  
Now, since $R$ contains the original droplet, if $R$ is infected the infection can grow from the 
droplet into a droplet big enough to contain $a_1 u + R$ while staying 
in $R \cup (a_1u+R) \cup (2a_1u+R)$ (see figure \ref{fig_preuve_prop_Bollobas}(c)). 
We call $x_1,\dots,x_m$ the sites that are successively infected during this growth 
(sites infected at the same time are ordered arbitrarily).
Since $x_1$ is the first site infected by the bootstrap percolation dynamics starting with the sites of $R$ 
infected, there exists an update rule $X$ such that $x_1+X \subset R$, therefore if 
the KCM dynamics starts with all the sites of $R$ at zero and there is a 0-clock ring at $x_1$, 
this clock ring sets $x_1$ to zero. Then, if there is a 0-clock ring at $x_2$, it will set $x_2$ to zero 
for the same reason, and successive 0-clock rings at $x_3,\dots,x_m$ will set them successively to 0, 
which puts $a_1u+R$ at zero.
\begin{figure}
\parbox{0.4\textwidth}{
\begin{center}
\begin{tikzpicture}[scale=0.35]
  \foreach \i in {-2,-1,...,13} \draw (\i,0) node{$\circ$};
  \foreach \i in {0,1,...,5} \draw (\i,0) node{$\bullet$};
  \draw [dotted] (-3.5,0)--(-2.5,0);
  \draw [dotted] (13.5,0)--(14.5,0);
  \draw (0,0) node[below] {$0$};
  \draw (5,0) node[below] {$\ell$};
  \draw[decorate,decoration={brace}] (-0.5,1.8)--(5.5,1.8) node[midway,above]{$R$};
  \draw[dotted] (-0.5,1.8)--(-0.5,-0.2);
  \draw[dotted] (5.5,1.8)--(5.5,-0.2);
  \draw (0.5,0.4)--(3.5,0.4)--(3.5,-0.4)--(0.5,-0.4)--cycle;
  \draw (2,0.2) node [above] {$(\ell+1)+X$};
  \draw (6,0) circle (0.4) node[above right]{$\ell+1$};
  \draw (5.5,-1.6) node{$\downarrow$} ;
  \foreach \i in {-2,-1,...,13} \draw (\i,-4) node{$\circ$};
  \foreach \i in {0,1,...,6} \draw (\i,-4) node{$\bullet$};
  \draw [dotted] (-3.5,-4)--(-2.5,-4);
  \draw [dotted] (13.5,-4)--(14.5,-4);
  \draw (0,-4) node[below] {$0$};
  \draw (5,-4) node[below] {$\ell$};
  \draw (1.5,-3.6)--(4.5,-3.6)--(4.5,-4.4)--(1.5,-4.4)--cycle;
  \draw (3,-3.8) node [above] {$(\ell+2)+X$};
  \draw (7,-4) circle (0.4) node[above right]{$\ell+2$};
  \draw (5.5,-6) node{$\downarrow$} ;
  \foreach \i in {-2,-1,...,13} \draw (\i,-7) node{$\circ$};
  \foreach \i in {0,1,...,7} \draw (\i,-7) node{$\bullet$};
  \draw [dotted] (-3.5,-7)--(-2.5,-7);
  \draw [dotted] (13.5,-7)--(14.5,-7);
  \draw (0,-7) node[below] {$0$};
  \draw (5,-7) node[below] {$\ell$};
  \draw (5.5,-9) node{$\downarrow$} ;
  \draw (5.5,-10) node{$\dots$} ;
  \draw (5.5,-11) node{$\downarrow$} ;
  \foreach \i in {-2,-1,...,13} \draw (\i,-13.5) node{$\circ$};
  \foreach \i in {0,1,...,11} \draw (\i,-13.5) node{$\bullet$};
  \draw [dotted] (-3.5,-13.5)--(-2.5,-13.5);
  \draw [dotted] (13.5,-13.5)--(14.5,-13.5);
  \draw (0,-13.5) node[below] {$0$};
  \draw (5,-13.5) node[below] {$\ell$};
  \draw (11,-13.5) node[below] {$2\ell+1$};
  \draw[decorate,decoration={brace}] (-0.5,-13.1)--(5.5,-13.1) node[midway,above]{$R$};
  \draw[decorate,decoration={brace}] (5.5,-13.1)--(11.5,-13.1) node[midway,above]{$(\ell+1)+R$};
\end{tikzpicture}

(a)
\end{center}
}
\parbox{0.23\textwidth}{
\begin{center}
\begin{tikzpicture}[scale=0.3]
  \draw (0,0)--(-3,3)--(-2,4)--(0,5)--(3,4)--(2,2)--cycle;
  \draw[dashed] (0,0)--(-3,3)--(0,6)--(2,7)--(5,6)--(4,4)--cycle;
  \draw[>=stealth,->] (-1,2)--(0.5,3.5) node [midway, above left] {$u$};
\end{tikzpicture}

(b)
\end{center}
}
\parbox{0.25\textwidth}{
\begin{center}
\begin{tikzpicture}[scale=0.25]
\draw [black,fill=gray!40] (0,0)--(-3,3)--(5,11)--(7,12)--(10,11)--(9,9)--cycle ;
  \draw[dashed] (0,0)--(-3,3)--(-2,4)--(0,5)--(3,4)--(2,2)--cycle;
  \draw (0,0)--(-3,3)--(1,7)--(4,4)--cycle;
  \draw (-3,3)--(1,7) node[midway,above left] {$R$};
  \draw (1,7)--(4,4)--(8,8)--(5,11)--cycle;
  \draw (1,7)--(5,11) node[midway,above left] {$a_1u+R$};
  \draw (5,11)--(8,8)--(12,12)--(9,15)--cycle;
  \draw (5,11)--(9,15) node[midway,above left] {$2a_1u+R$};
  \draw [>=stealth,->] (1,-1)--(3,1) node[midway,below right]{$u$};
  \draw [>=stealth,->] (-1,-1)--(-3,1) node[midway,below left]{$u^\perp$};
\end{tikzpicture}

(c)
\end{center}
}

  \caption{The proof of proposition \ref{prop_Bollobas}. 
  (a) The mechanism for $d=1$; the $\bullet$ represent zeroes and the $\circ$ represent ones.  
  (b) The shape delimited by the solid line is the droplet of \cite{Bollobas_et_al2015}; 
  if it is infected in the bootstrap percolation dynamics, the infection 
  can grow to the shape delimited by the dashed line. (c) $R$ contains the 
  original droplet (dashed line), hence if $R$ is infected, the infection 
  can propagate to a bigger droplet (in gray) that contains $a_1u+R$ and is contained 
  in $R \cup (a_1u+R) \cup (2a_1u+R)$.}
  \label{fig_preuve_prop_Bollobas}
\end{figure}
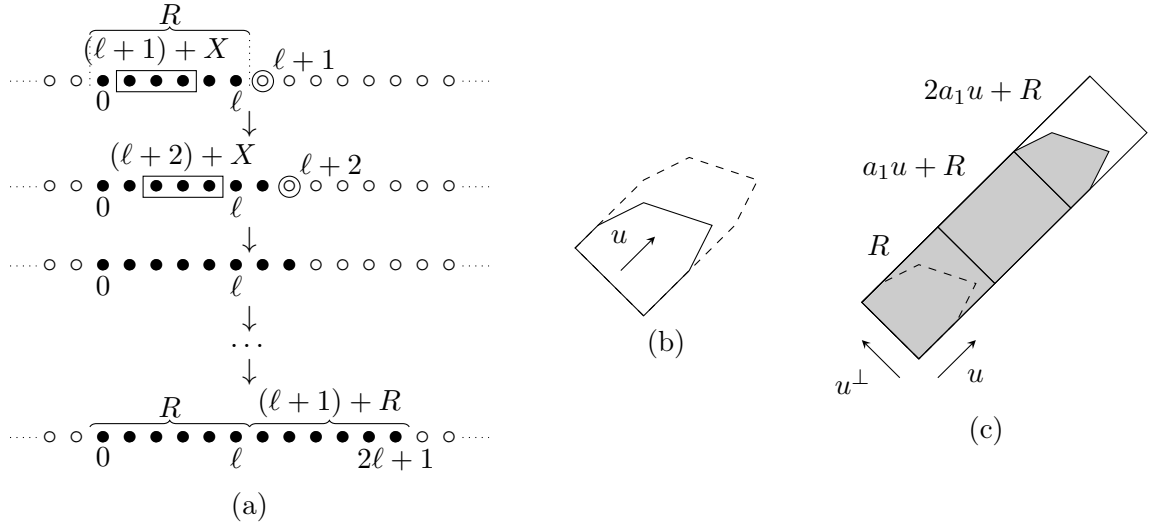
\end{proof}

\subsection{Definition of the auxiliary process}\label{subsec_def_aux_process}

Let $K > 0$, $q \in [0,1]$ and $t \geq K$. For any $y \in \mathds{Z}^d$ and 
$k \in \{0,\dots,\lfloor \frac{t}{K} \rfloor\}$, 
we will define an oriented percolation process $\zeta^{y,k}$ on $\mathds{Z}$, 
from time zero to time $n^{y,k} = \lfloor \frac{t}{K} \rfloor-k$ 
(see \cite{Durrett_1984} for an introduction to oriented percolation). 
For $n \in \{1,\dots, n^{y,k}\}$ and $r \in \mathds{Z}$ with $r+n$ even, the \emph{bonds}
$(r-1,n-1)\rightarrow (r,n)$ and $(r+1,n-1)\rightarrow (r,n)$ can be \emph{open} or \emph{closed}. 
We set $\zeta_0^{y,k}(r)=\mathds{1}_{\{r=0\}}$, and for any $n \in \{1,\dots, n^{y,k}\}$, 
$r \in \mathds{Z}$ with $r+n$ even, $\zeta_n^{y,k}(r)=1$ if and only if 
$\zeta_{n-1}^{y,k}(r-1)=1$ and the bond $(r-1,n-1)\rightarrow (r,n)$ is open 
or $\zeta_{n-1}^{y,k}(r+1)=1$ and the bond $(r+1,n-1)\rightarrow (r,n)$ is open. 
For any $n \in \{1,\dots, n^{y,k}\}$, $r \in \mathds{Z}$ with $r+n$ odd, we set $\zeta_n^{y,k}(r)=0$. 

The state of the bonds is defined as follows. For any $n \in \{1,\dots,n^{y,k}\}$, $r \in \mathds{Z}$ with $r+n$ even: 
\begin{itemize}
\item $(r-1,n-1)\rightarrow (r,n)$ is open if and only if 
\[
\left\{\forall x \in y+\frac{r-n}{2} a_1u+R, ]t-(k+n)K,t-(k+n-1)K]\cap\mathcal{P}^1_x=\emptyset\right\}, 
\] 
i.e. there is no 1-clock ring in $y+\frac{r-n}{2} a_1u+R$ during the time interval $]t-(k+n)K,t-(k+n-1)K]$; 
\item $(r+1,n-1)\rightarrow (r,n)$ is open if and only if 
\begin{gather*}
  \left\{\exists t-(k+n)K < t_1 < \cdots < t_m \leq t-(k+n-1)K, 
\forall i \in \{1,\dots,m\}, t_i \in \mathcal{P}^0_{y+\frac{r-n}{2} a_1u+x_i}\right\} \\ 
\cap \left\{\forall x \in y+\frac{r-n}{2} a_1u+(R \cup \{x_1,\dots,x_m\}), 
]t-(k+n)K,t-(k+n-1)K]\cap\mathcal{P}^1_x=\emptyset\right\},
\end{gather*} 
i.e. there are successive 0-clock rings in the equivalent of $x_1,\dots,x_m$ for $y+\frac{r-n}{2} a_1u+R$ 
during the time interval $]t-(k+n)K,t-(k+n-1)K]$, and no 1-clock ring at these sites or in $y+\frac{r-n}{2} a_1u+R$ 
in this time interval.
\end{itemize}

We notice that if all the sites of $y+\frac{r-n}{2} a_1u+R$ are at zero at time $t-(k+n)K$ 
and $(r-1,n-1)\rightarrow (r,n)$ is open, the sites of $y+\frac{r-n}{2} a_1u+R$ are still at zero 
at time $t-(k+n-1)K$. Moreover, by proposition \ref{prop_Bollobas}, if 
the sites of $y+\frac{r-n}{2} a_1u+R$ are at zero at time $t-(k+n)K$ and $(r+1,n-1)\rightarrow (r,n)$ is open, 
the sites of $a_1u+(y+\frac{r-n}{2} a_1u+R) = y+\frac{(r+1)-(n-1)}{2} a_1u+R$ are at zero at time $t-(k+n-1)K$.
This allows us to deduce (see figure \ref{fig_transfer_zeroes} for an illustration of the mechanism): 

\begin{proposition}\label{prop_transfer_zeroes2}
If there exists $r_0 \in \mathds{Z}$ such that 
$\zeta_{n^{y,k}}^{y,k}(r_0)=1$ and the sites of $y+\frac{r_0-n^{y,k}}{2}a_1u+R$ 
are at zero at time $t-\lfloor \frac{t}{K} \rfloor K$, then the sites of $y+R$ are at zero at time $t-kK$.
\end{proposition}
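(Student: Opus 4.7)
The plan is to unwind the oriented percolation relation into an explicit open path and propagate zeros along it by induction. Since $\zeta_0^{y,k}(\cdot) = \mathds{1}_{\{\cdot = 0\}}$, a standard argument for oriented percolation shows that $\zeta_{n^{y,k}}^{y,k}(r_0)=1$ guarantees the existence of a sequence $p_0 = 0, p_1, \dots, p_{n^{y,k}} = r_0$ of integers with $|p_i - p_{i-1}|=1$ such that every bond $(p_{i-1}, i-1) \to (p_i, i)$ is open. I would then prove, by backward induction on $i \in \{0, 1, \dots, n^{y,k}\}$, the statement $(H_i)$: at physical time $t - (k+i)K$, every site of $y + \frac{p_i - i}{2} a_1 u + R$ is at zero. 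The base case $i = n^{y,k}$ is exactly the hypothesis of the proposition, since $p_{n^{y,k}} = r_0$ and $t - (k+n^{y,k})K = t - \lfloor t/K\rfloor K$, and the conclusion follows from $(H_0)$ because $p_0 = 0$ gives $y + R$ at zero at time $t - kK$.

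The inductive step from $(H_i)$ to $(H_{i-1})$ splits according to the two possible types of open bond. If $p_{i-1} = p_i - 1$, one checks that $\frac{p_{i-1} - (i-1)}{2} = \frac{p_i - i}{2}$, so the two translates of $R$ coincide, and the ``no 1-clock ring in $y + \frac{p_i-i}{2} a_1 u + R$ during $]t-(k+i)K, t-(k+i-1)K]$'' clause built into the definition of this bond being open is exactly what is needed to keep zeros where they are. If instead $p_{i-1} = p_i + 1$, then $\frac{p_{i-1} - (i-1)}{2} = \frac{p_i - i}{2} + 1$, so the new translate is obtained from the old one by shifting by $a_1 u$; the open bond condition supplies successive 0-clock rings at the translated sites $y + \frac{p_i-i}{2} a_1 u + x_j$ together with the absence of 1-clock rings in the translated set $y + \frac{p_i-i}{2} a_1 u + (R \cup \{x_1, \dots, x_m\})$, which are precisely the hypotheses of proposition \ref{prop_Bollobas} applied after translating by $y + \frac{p_i - i}{2} a_1 u$. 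The proposition then yields zeros throughout $y + (\frac{p_i - i}{2}+1) a_1 u + R$ at time $t - (k+i-1)K$.

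The argument is largely mechanical once the bookkeeping is in place; the only piece of substance is verifying that the clock-ring conditions packaged into the definition of an open bond of the second type match the hypotheses of proposition \ref{prop_Bollobas} verbatim after the translation by $y + \frac{p_i - i}{2} a_1 u$, which is immediate by translation invariance of the Harris construction. I expect the only potential pitfall to be a sign error in the offset $\frac{p_i - i}{2}$ or a mismatch in the physical time intervals, so the write-up should make the two sides of the induction explicit at each step.
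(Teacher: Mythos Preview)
Your proposal is correct and is exactly the argument the paper has in mind: the paper does not write out a formal proof but, immediately before the proposition, records the two one-step facts (an open $(r-1,n-1)\to(r,n)$ bond preserves zeros in $y+\tfrac{r-n}{2}a_1u+R$, while an open $(r+1,n-1)\to(r,n)$ bond propagates them to $y+\tfrac{(r+1)-(n-1)}{2}a_1u+R$ via proposition~\ref{prop_Bollobas}) and then simply states the proposition with an illustrative figure. Your explicit open-path extraction and backward induction on $i$ is precisely the unwinding of that mechanism, and your index computations $\tfrac{p_{i-1}-(i-1)}{2}=\tfrac{p_i-i}{2}$ and $\tfrac{p_{i-1}-(i-1)}{2}=\tfrac{p_i-i}{2}+1$ in the two cases are correct.
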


\begin{figure}
\begin{tikzpicture}
\draw (0,-3)--(0,3);
\draw[dashed] (0,-3)--(0,-3.5);
\draw[dashed] (0,3)--(0,3.5);
\draw (0,-3) node{$-$} node[right] {-3};
\draw (0,-2) node[right] {-2};
\draw (0,-1) node{$-$} node[right] {-1};
\draw (0,0) node[right] {0};
\draw (0,1) node{$-$} node[right] {1};
\draw (0,2) node[right] {2};
\draw (0,3) node{$-$} node[right] {3};
\draw (0,4) node{$r$} ;
\draw[->] (0,-3.8)--(-3,-3.8);
\draw (0,-3.8) node{$\shortmid$} node[below] {0};
\draw (-1,-3.8) node{$\shortmid$} node[below] {1};
\draw (-2,-3.8) node{$\shortmid$} node[below] {2};
\draw (-3,-3.8) node[below] {3};
\draw (-1.5,-4.5) node{$n$};
\draw[gray] (-1,-3)--(0,-2);
\draw[gray] (-3,-3)--(0,0);
\draw[gray] (-3,-1)--(0,2);
\draw[gray] (-3,1)--(-1,3);
\draw[gray] (-1,-3)--(-3,-1);
\draw[gray] (0,-2)--(-3,1);
\draw[gray] (0,0)--(-3,3);
\draw[gray] (0,2)--(-1,3);
\draw[gray,dashed] (-1,-3)--(-0.5,-3.5);
\draw[gray,dashed] (-1,-3)--(-1.5,-3.5);
\draw[gray,dashed] (-3,-3)--(-2.5,-3.5);
\draw[gray,dashed] (-1,3)--(-0.5,3.5);
\draw[gray,dashed] (-1,3)--(-1.5,3.5);
\draw[gray,dashed] (-3,3)--(-2.5,3.5);
\draw [ultra thick] (-1,-3)--(-2,-2);
\draw [ultra thick] (-3,-3)--(-2,-2);
\draw [ultra thick] (-3,-1)--(-2,-2);
\draw [ultra thick] (-1,-1)--(-2,0);
\draw [ultra thick] (-2,0)--(-3,1);
\draw [ultra thick] (-2,0)--(-1,1);
\draw [ultra thick] (-1,1)--(0,0);
\draw [ultra thick] (-2,2)--(-3,3);
\draw [ultra thick] (-1,3)--(0,2);
\draw[dashed, ultra thick] (-3,-3)--(-2.5,-3.5);
\draw (-3,1) node [left] {$r_0$};
\draw (-2,0)--(-3,1) node[midway, below left]{$b_1$};
\draw (-1,1)--(-2,0) node[midway, below right]{$b_2$};
\draw (0,0)--(-1,1) node[midway, above right]{$b_3$};
\draw[>=stealth,->] (0,0.2)--(-0.8,1) ;
\draw[>=stealth,->] (-1,0.8)--(-1.8,0) ;
\draw[>=stealth,->] (-2.2,0)--(-3,0.8) ;
\end{tikzpicture}
\hspace{\fill}
\begin{tikzpicture}
\draw (0,-3) node[left]{$y-3a_1u+R$};
\draw (0,-2) node[left]{$y-2a_1u+R$};
\draw (0,-1) node[left]{$y-a_1u+R$};
\draw (0,0) node[left]{$y+R$};
\draw (0,1) node[left]{$y+a_1u+R$};
\draw (0,2) node[left]{$y+2a_1u+R$};
\draw (0,3) node[left]{$y+3a_1u+R$};
\draw (0,-3.5)--(0,3.5);
\draw (1,-3.5)--(1,3.5);
\draw[dashed] (0,-4)--(0,-3.5);
\draw[dashed] (1,-4)--(1,-3.5);
\draw[dashed] (0,4)--(0,3.5);
\draw[dashed] (1,4)--(1,3.5);
\draw (0,-3.5)--(1,-3.5);
\draw (0,-2.5)--(1,-2.5);
\draw (0,-1.5)--(1,-1.5);
\draw (0,-0.5)--(1,-0.5);
\draw (0,0.5)--(1,0.5);
\draw (0,1.5)--(1,1.5);
\draw (0,2.5)--(1,2.5);
\draw (0,3.5)--(1,3.5);
\draw (0.5,-4.5) node{$t-(k+3)K$};
\draw (2.5,-3.5)--(2.5,3.5);
\draw (3.5,-3.5)--(3.5,3.5);
\draw[dashed] (2.5,-4)--(2.5,-3.5);
\draw[dashed] (3.5,-4)--(3.5,-3.5);
\draw[dashed] (2.5,4)--(2.5,3.5);
\draw[dashed] (3.5,4)--(3.5,3.5);
\draw (2.5,-3.5)--(3.5,-3.5);
\draw (2.5,-2.5)--(3.5,-2.5);
\draw (2.5,-1.5)--(3.5,-1.5);
\draw (2.5,-0.5)--(3.5,-0.5);
\draw (2.5,0.5)--(3.5,0.5);
\draw (2.5,1.5)--(3.5,1.5);
\draw (2.5,2.5)--(3.5,2.5);
\draw (2.5,3.5)--(3.5,3.5);
\draw (3,-4.5) node{$t-(k+2)K$};
\draw (5,-3.5)--(5,3.5);
\draw (6,-3.5)--(6,3.5);
\draw[dashed] (5,-4)--(5,-3.5);
\draw[dashed] (6,-4)--(6,-3.5);
\draw[dashed] (5,4)--(5,3.5);
\draw[dashed] (6,4)--(6,3.5);
\draw (5,-3.5)--(6,-3.5);
\draw (5,-2.5)--(6,-2.5);
\draw (5,-1.5)--(6,-1.5);
\draw (5,-0.5)--(6,-0.5);
\draw (5,0.5)--(6,0.5);
\draw (5,1.5)--(6,1.5);
\draw (5,2.5)--(6,2.5);
\draw (5,3.5)--(6,3.5);
\draw (5.5,-4.5) node{$t-(k+1)K$};
\draw (7.5,-3.5)--(7.5,3.5);
\draw (8.5,-3.5)--(8.5,3.5);
\draw[dashed] (7.5,-4)--(7.5,-3.5);
\draw[dashed] (8.5,-4)--(8.5,-3.5);
\draw[dashed] (7.5,4)--(7.5,3.5);
\draw[dashed] (8.5,4)--(8.5,3.5);
\draw (7.5,-3.5)--(8.5,-3.5);
\draw (7.5,-2.5)--(8.5,-2.5);
\draw (7.5,-1.5)--(8.5,-1.5);
\draw (7.5,-0.5)--(8.5,-0.5);
\draw (7.5,0.5)--(8.5,0.5);
\draw (7.5,1.5)--(8.5,1.5);
\draw (7.5,2.5)--(8.5,2.5);
\draw (7.5,3.5)--(8.5,3.5);
\draw (8,-4.5) node{$t-kK$};
\draw [black,fill=gray] (0,-0.5)--(1,-0.5)--(1,-1.5)--(0,-1.5)--cycle;
\draw[>=stealth,->] (1.2,-1)--(2.3,-1);
\draw [black,fill=gray] (2.5,-0.5)--(3.5,-0.5)--(3.5,-1.5)--(2.5,-1.5)--cycle;
\draw[>=stealth,->] (3.7,-1)--(4.8,0);
\draw [black,fill=gray] (5,0.5)--(6,0.5)--(6,-0.5)--(5,-0.5)--cycle;
\draw[>=stealth,->] (6.2,0)--(7.3,0);
\draw [black,fill=gray] (7.5,0.5)--(8.5,0.5)--(8.5,-0.5)--(7.5,-0.5)--cycle;
\end{tikzpicture}
\caption{An illustration of proposition \ref{prop_transfer_zeroes2} with $n^{y,k} = 3$ and $r_0=1$. 
The figure at the left represents the bonds of the oriented percolation 
process $\zeta^{y,k}$; the open bonds are the thick ones, and the path of open bonds allowing 
$\zeta_{n^{y,k}}^{y,k}(r_0)=1$ is outlined by arrows. 
The figure at the right represents the consequences on the KCM process; each vertical strip represents the state 
of $\bigcup_{i \in \mathds{Z}}(y+ia_1u+R)$ at a certain time. If at time $t-(k+3)K$ the 
rectangle $y+\frac{1-n^{y,k}}{2}a_1u+R = y-a_1u+R$ is at zero (in gray), since the bond 
$(0,2) \rightarrow (1,3)$ (bond $b_1$) is open, $y-a_1u+R$ is still at zero at time $t-(k+2)K$. Moreover, since 
$(1,1) \rightarrow (0,2)$ (bond $b_2$) is open and $y-a_1u+R$ is at zero at time $t-(k+2)K$, 
$a_1u+(y-a_1u+R) = y+R$ is at zero at time $t-(k+1)K$. Finally, since $(0,0) \rightarrow (1,1)$ (bond $b_3$) is open 
and $y+R$ is at zero at time $t-(k+1)K$, $y+R$ is still at zero at time $t-kK$.}
\label{fig_transfer_zeroes}
\end{figure}
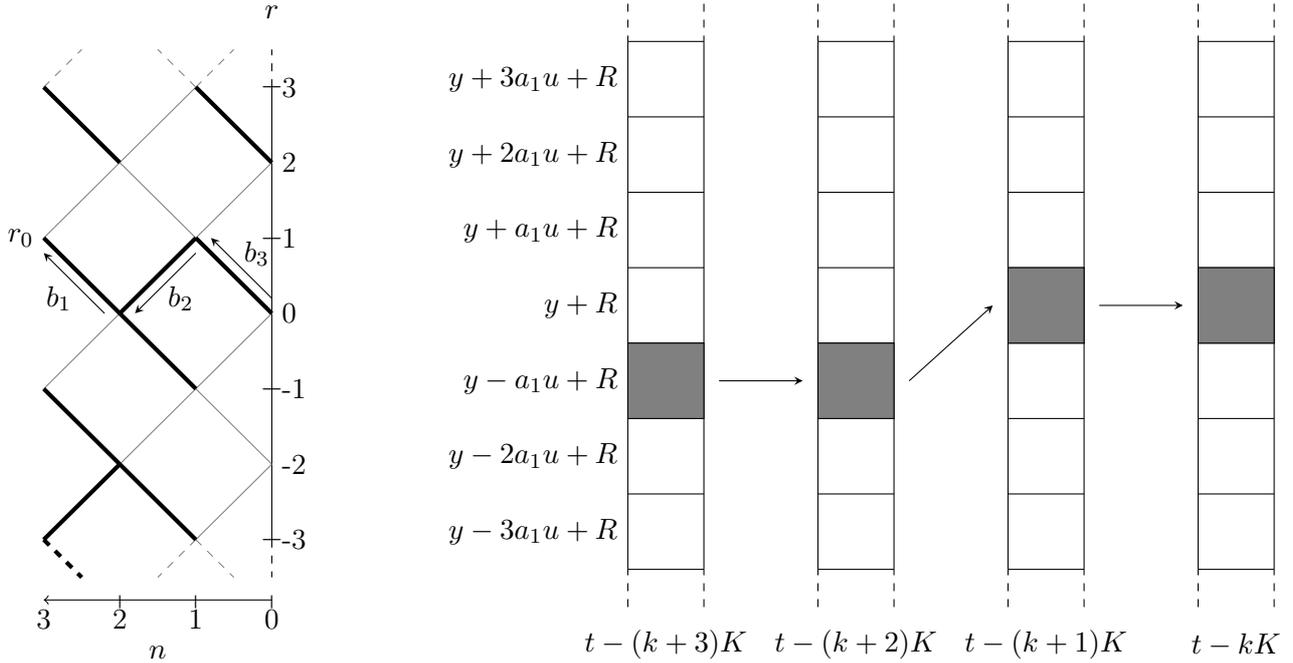

\subsection{Properties of the auxiliary process}\label{subsec_prop_aux_process}

In this subsection we state the two oriented percolation properties 
of $\zeta^{y,k}$, propositions \ref{prop_extinction_time} and \ref{prop_large_deviations}, that we will use in 
section \ref{sec_preuve_codings} to prove proposition \ref{prop_bound_single_coding}. 
In order to do that, we need a bound on the probability 
that a bond is closed; this will be lemma \ref{lemma_prob_bonds}. It is there that we need 
$q$ bigger than a $q_0 > 0$; this is necessary so that the probability 
that there is no 1-clock ring at the sites we consider 
is large. For any $K > 0$, we set $q_K = 1+\frac{1}{3K|R|}\ln(1-e^{-K})$. We can then state 

\begin{lemma}\label{lemma_prob_bonds}
There exists a constant $K_p=K_p(\mathcal{U}) > 0$ such that for $K \geq K_p$, 
$q \in [q_K,1]$, $t \geq K$, $y \in \mathds{Z}^d$ and $k \in \{0,\dots,\lfloor \frac{t}{K} \rfloor\}$, 
the probability that any given bond is closed for the process $\zeta^{y,k}$
is smaller than $e^{-\frac{K}{4}}$.
\end{lemma}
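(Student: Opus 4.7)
The plan is to treat the two bond types separately and to exploit the independence of the 0-clock and 1-clock Poisson processes, noting that by translation invariance the closure probability depends only on the bond type and the interval length $K$. For a bond of type $(r-1,n-1)\to(r,n)$, closure is simply the presence of a 1-clock ring in a translate of $R$ during a time interval of length $K$, which has probability $1-e^{-|R|(1-q)K}$. The defining inequality $1-q \leq \frac{1}{3K|R|}\ln\frac{1}{1-e^{-K}}$ imposed by $q \geq q_K$ gives $e^{-|R|(1-q)K} \geq (1-e^{-K})^{1/3}$, and since $(1-x)^{1/3} \geq 1-x$ on $[0,1]$, the closure probability is at most $e^{-K} \leq e^{-K/4}$; this part is actually valid for all $K$.

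For a bond of type $(r+1,n-1)\to(r,n)$, the open event factors as the intersection of (i) no 1-clock ring in the translate of $R \cup \{x_1,\ldots,x_m\}$ and (ii) a sequence of 0-clock rings at the translates of $x_1,\ldots,x_m$ occurring in the prescribed order within the interval; the two are independent because they are measurable with respect to disjoint Poisson processes. Since $\{x_1,\ldots,x_m\} \subset (a_1u+R) \cup (2a_1u+R)$ gives $m \leq 2|R|$ and hence $|R \cup \{x_1,\ldots,x_m\}| \leq 3|R|$, the failure of (i) has probability at most $e^{-K}$ by the same computation as above. The main obstacle is bounding the failure probability of (ii): the naive idea of partitioning $(0,K]$ into $m$ equal subintervals and requiring one 0-clock ring per site per subinterval yields only a bound of order $me^{-qK/m}$, too weak to beat $e^{-K/4}$ as soon as $m \geq 2$. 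Instead, I would use a memoryless/greedy construction: set $\tau_1$ to be the first 0-clock ring at the translate of $x_1$, and recursively $\tau_i$ to be the first 0-clock ring at the translate of $x_i$ strictly after $\tau_{i-1}$. Because the Poisson processes at the distinct sites $x_1,\ldots,x_m$ are independent and $\tau_{i-1}$ depends only on the first $i-1$ of them, the strong Markov property gives that the increments $\tau_i-\tau_{i-1}$ are iid $\mathrm{Exp}(q)$, so $\tau_m$ has distribution $\Gamma(m,q)$. The greedy construction is optimal: an ordered sequence of 0-clock rings fitting in $(0,K]$ exists if and only if $\tau_m \leq K$. Hence the failure probability of (ii) equals $\mathds{P}(\mathrm{Poisson}(qK) < m) \leq m(qK)^{m-1}e^{-qK}/(m-1)!$, a polynomial in $K$ times $e^{-qK}$.

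It remains to combine the bounds. For $K$ larger than a threshold depending on $\mathcal{U}$, one has $q_K \geq 3/4$, hence $e^{-qK} \leq e^{-3K/4}$, and the polynomial factor $K^{m-1}$ is absorbed into $e^{K/4}$, giving at most $\frac{1}{2}e^{-K/4}$ for the (ii)-failure probability. Together with $e^{-K} \leq \frac{1}{2}e^{-K/4}$ (valid for $K$ above a small absolute constant), a union bound yields a closure probability at most $e^{-K/4}$ for this bond type as well. Setting $K_p = K_p(\mathcal{U})$ to be the smallest $K$ for which all these asymptotic conditions simultaneously hold completes the proof.
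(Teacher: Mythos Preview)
Your proof is correct and follows essentially the same route as the paper's: bound the failure of the ``no 1-clock ring'' part by $e^{-K}$ using $|R\cup\{x_1,\dots,x_m\}|\le 3|R|$ and the definition of $q_K$, identify the failure of the ``ordered 0-clock rings'' part with $\mathds{P}(\mathrm{Poisson}(qK)<m)$ via the greedy/memoryless construction, and absorb the polynomial prefactor into the exponential for $K$ large depending on $m$. The paper is slightly more economical in one place: it observes that closure of $(r-1,n-1)\to(r,n)$ implies closure of $(r+1,n-1)\to(r,n)$, so only the second bond type needs to be treated; your separate handling of the first type is harmless but redundant. One small slip: you write ``the distinct sites $x_1,\dots,x_m$'' and ``$m\le 2|R|$'', but the $x_i$ form a \emph{sequence} and may repeat, so $m$ need not be bounded by $2|R|$; fortunately you only use the set bound $|R\cup\{x_1,\dots,x_m\}|\le 3|R|$, which is correct, and your strong Markov justification for the i.i.d.\ $\mathrm{Exp}(q)$ increments goes through regardless of repetitions.
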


\begin{proof}
Let $K > 0$, $q \in [q_K,1]$, $t \geq K$, $y \in \mathds{Z}^d$ 
  and $k \in \{0,\dots,\lfloor \frac{t}{K} \rfloor\}$. 
Let $n \in \{1,\dots,n^{y,k}\}$, $r \in \mathds{Z}$ with $r+n$ even. 
We notice that if the bond $(r-1,n-1)\rightarrow (r,n)$ is closed, the bond $(r+1,n-1)\rightarrow (r,n)$ 
is also closed, hence it is enough to bound the probability that $(r+1,n-1)\rightarrow (r,n)$ is closed. 
Denoting $E_1 = \{\forall x \in y+\frac{r-n}{2} a_1u+R \cup \{x_1,\dots,x_m\}, 
]t-(k+n)K,t-(k+n-1)K]\cap\mathcal{P}^1_x=\emptyset\}$ and 
$E_2 = \{\exists t-(k+n)K < t_1 < \cdots < t_m \leq t-(k+n-1)K, 
\forall i \in \{1,\dots,m\}, t_i \in \mathcal{P}^0_{y+\frac{r-n}{2} a_1u+x_i}\}$, we need to bound the 
probabilities of $E_1^c$ and $E_2^c$. We begin with $E_1^c$. The events $]t-(k+n)K,t-(k+n-1)K]\cap\mathcal{P}^1_x=\emptyset$ are 
independent and have probability $e^{-(1-q)K}$ each; moreover, $x_1,\dots,x_m$ belong to 
$(a_1u+R) \cup (2a_1u+R)$, so $\left|R\cup\{x_1,\dots,x_m\}\right| \leq 3|R|$; 
we deduce the probability of $E_1$ is  
\begin{gather*}
e^{-\left|R\cup\{x_1,\dots,x_m\}\right|(1-q)K} \geq e^{-3|R|(1-q)K} \geq e^{-3|R|(1-q_K)K} \\
  \geq e^{-3|R|\left(1-\left(1+\frac{1}{3K|R|}\ln(1-e^{-K})\right)\right)K} = e^{\ln(1-e^{-K})} = 1-e^{-K},
\end{gather*}
thus the probability of $E_1^c$ is at most $e^{-K}$.
Moreover, the probability of $E_2^c$ is the probability 
that a Poisson point process of parameter $q$ has strictly less than 
$m$ elements in an interval of length $K$, hence it is $\sum_{i=0}^{m-1} e^{-qK}\frac{(qK)^i}{i!}$. 
When $K$ is large enough, $q \in [1/2,1]$, hence this probability is smaller than 
$e^{-\frac{1}{2}K} \sum_{i=0}^{m-1} \frac{K^i}{i!}$, which is smaller than $e^{-\frac{K}{3}}$ 
when $K$ is large enough depending on $m$, hence on $\mathcal{U}$.
Consequently, when $K$ is large enough depending on $\mathcal{U}$, the probability that 
$(r+1,n-1)\rightarrow (r,n)$ is closed is smaller than $e^{-K} + e^{-\frac{K}{3}}$, 
which is smaller than $e^{-\frac{K}{4}}$.
\end{proof}

Thanks to lemma \ref{lemma_prob_bonds}, it is possible to prove two oriented percolation properties 
of $\zeta^{y,k}$. Firstly, for any $K > 0$, $q \in [q_K,1]$, $t \geq K$, $y \in \mathds{Z}^d$ 
and $k \in \{0,\dots,\lfloor \frac{t}{K} \rfloor\}$, 
we define $\tau^{y,k}=\inf\{n \in \{0,\dots,n^{y,k}\} 
\,|\, \forall r \in \mathds{Z}, \zeta^{y,k}_n(r)=0\}$ 
the time of death of the process $\zeta^{y,k}$
(if the set is empty, $\tau^{y,k}$ is infinite). Since $\zeta_0^{y,k}(r)=\mathds{1}_{\{r=0\}}$, which 
is not identically zero, $\tau^{y,k} \geq 1$. Then we have 

\begin{proposition}\label{prop_extinction_time}
For any $q' \in [0,1]$, there exists a constant $K_c=K_c(\mathcal{U})>0$ such that for any $K \geq K_c$, 
$q \in [q_K,1]$, $t \geq K$, $y \in \mathds{Z}^d$, 
$k \in \{0,\dots,\lfloor \frac{t}{K} \rfloor\}$, 
$n \in \{0,\dots,n^{y,k}\}$, 
$\mathds{P}_{q',q}(n \leq \tau^{y,k} < + \infty) \leq 2. 3^{2n} e^{-\frac{Kn}{24}}$.
\end{proposition}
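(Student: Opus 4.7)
The plan is to apply a Peierls-type contour argument, familiar in the study of supercritical two-dimensional oriented percolation (see \cite{Durrett_1984}). By Lemma \ref{lemma_prob_bonds}, for $K \geq K_p$ and $q \in [q_K, 1]$ each bond of $\zeta^{y,k}$ is closed with probability at most $p := e^{-K/4}$, so the process is far inside its supercritical regime when $K$ is large, and the event $\{\tau^{y,k} < \infty\}$ is exceptional. The event $\{n \leq \tau^{y,k} < \infty\}$ says that the open cluster from $(0,0)$ reaches percolation height at least $n-1$ but is finite, and I bound its probability by enumerating the closed dual contours that must surround such a cluster.

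First, I would observe that any finite open cluster in a 2D oriented lattice is enclosed by a self-avoiding dual path $\Pi$ made of dual edges of closed primal bonds; since the cluster reaches height $n-1$, $\Pi$ has length at least $2n$. Second, I would count: a self-avoiding walk on the oriented dual lattice has at most three continuations at each step, so the number of candidate $\Pi$ of length $L$ with a canonical starting vertex near $(0,0)$ is at most $3^L$. Third, I would bound the probability that a given $\Pi$ of length $L$ has all its primal bonds closed. This is the technical heart of the argument, because the bonds of $\zeta^{y,k}$ are not mutually independent: the two bonds entering $(r,n)$ share a constraint on $y + \frac{r-n}{2} a_1 u + R$, and within one time-strip $]t-(k+n)K, t-(k+n-1)K]$ bonds whose $r$-coordinates differ by at most $6$ depend on overlapping Poisson processes (the $6$ coming from the fact that $x_1,\ldots,x_m \in (a_1 u + R)\cup(2 a_1 u + R)$ in Proposition \ref{prop_Bollobas}). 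Bonds in different time-strips are however automatically independent, so from any $\Pi$ of length $L$ one can extract at least $L/c_0$ pairwise independent closed bonds for a constant $c_0$ depending on $\mathcal U$ only, with $c_0 \leq 12$; this gives $\mathds{P}_{q',q}(\Pi\text{ closed}) \leq e^{-KL/(4c_0)}$.

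Combining the enumeration and the probability bound in a union bound:
\[
\mathds{P}_{q',q}(n \leq \tau^{y,k} < \infty) \leq \sum_{L \geq 2n} 3^L e^{-KL/(4c_0)}.
\]
Choosing $K_c$ large enough that $3 e^{-K/(4c_0)} \leq 1/2$, the geometric sum is bounded by $2(3 e^{-K/(4c_0)})^{2n} = 2 \cdot 3^{2n} e^{-Kn/(2c_0)} \leq 2 \cdot 3^{2n} e^{-Kn/24}$ since $c_0 \leq 12$. The main obstacle is the third step: carefully verifying the dependence structure of the bonds along $\Pi$ and extracting an independent sub-collection with the right density, for which the explicit geometry of the translates $y + s a_1 u + R$ and the offset points $y + s a_1 u + x_i$ from Proposition \ref{prop_Bollobas} must be used; once this is in place, the remainder is a routine Peierls summation and geometric series.
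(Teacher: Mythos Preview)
Your proposal is correct and follows essentially the same contour/Peierls argument as the paper's own sketch: both invoke Durrett's contour method, note that the contour around the finite cluster has length at least of order $n$, handle the finite-range dependence among bonds by extracting an independent sub-collection (the paper keeps one bond in six along the contour, you keep one in $c_0\le 12$), and then appeal to Lemma~\ref{lemma_prob_bonds} for the single-bond closure probability $e^{-K/4}$. The only difference is cosmetic bookkeeping of the dependence constant, which does not affect the final bound $2\cdot 3^{2n}e^{-Kn/24}$.
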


\begin{proof}[Sketch of proof.]
The proposition can be proven by a classical contour method like the one presented in section 10 of 
\cite{Durrett_1984}. The idea is that if $n \leq \tau^{y,k} < + \infty$ we can draw a 
``contour of closed bonds'' around the connected component of ones in $\zeta^{y,k}$, 
and this contour will have length $\Omega(n)$. Furthermore, it can be seen that bonds separated by at least 5 bonds  
from each other are independent, because they depend on clock rings in disjoint space-time intervals. 
Therefore if we keep one bond out of 6, we extract $\Omega(n)$ independent closed bonds from the contour, each of them 
having probability $e^{-\frac{K}{4}}$ from lemma \ref{lemma_prob_bonds} when $K \geq K_p$, hence the bound.
\end{proof}

$\zeta^{y,k}$ also satisfies a second property. For any $K > 0$, $q \in [q_K,1]$, $t \geq K$, 
$y \in \mathds{Z}^d$ and $k \in \{0,\dots,\lfloor \frac{t}{K} \rfloor\}$,
we denote $\mathcal{X}^{y,k}=\{r \in \{-\lfloor \frac{n^{y,k}}{2} \rfloor,\dots,
\lfloor \frac{n^{y,k}}{2} \rfloor\} \,|\, \zeta^{y,k}_{n^{y,k}}(r) = 1\}$. 
Then we have 

\begin{proposition}\label{prop_large_deviations}
For any $q' \in [0,1]$, $\alpha \in ]0,1[$, there exists a constant 
$K_g(\alpha) = K_g(\mathcal{U},\alpha) > 0$ such that for 
any $K \geq K_g(\alpha)$, there exist constants $c_g > 0$ and $C_g = C_g(\mathcal{U},K,\alpha) > 0$ 
such that for any $q \in [q_K,1]$, $t \geq K$, 
$y \in \mathds{Z}^d$ and $k \in \{0,\dots,\lfloor \frac{t}{K} \rfloor\}$, 
$\mathds{P}_{q',q}\left(\tau^{y,k}=+\infty, |\mathcal{X}^{y,k}| \leq \frac{\alpha}{2} n^{y,k}\right) 
\leq C_ge^{-c_g n^{y,k}}$.
\end{proposition}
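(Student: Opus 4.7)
The plan is to exploit that, by Lemma~\ref{lemma_prob_bonds}, each bond of $\zeta^{y,k}$ is closed with probability at most $e^{-K/4}$, so taking $K$ large puts the process deep in the supercritical regime of one-dimensional oriented percolation. The argument is classical in spirit (compare Sections~10--12 of \cite{Durrett_1984}): I would first prove a linear edge-speed estimate and then deduce a density estimate inside the resulting cone via a block renormalization. The novelty is to obtain exponential large-deviation bounds and to track the $K$-dependence explicitly.

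First I would prove an edge-speed estimate: for $K$ large enough (depending on $\alpha$), with probability at least $1 - C_1 e^{-c_1 n^{y,k}}$, the rightmost element of $\mathcal{X}^{y,k}$ exceeds $(1-\eta) n^{y,k}/2$ and the leftmost is below $-(1-\eta) n^{y,k}/2$, where $\eta$ can be made as small as desired by enlarging $K$. This uses a contour argument in the same vein as the sketch of Proposition~\ref{prop_extinction_time}: failure would produce a dual path of $\Omega(\eta n^{y,k})$ closed bonds separating the origin's cluster from the right half-region; extracting every sixth bond yields $\Omega(\eta n^{y,k})$ independent closed bonds (each of probability at most $e^{-K/4}$), so the contribution of a given contour is at most $(e^{-K/4})^{\Omega(\eta n^{y,k})}$, and the $3^{O(\eta n^{y,k})}$ enumeration over contours is absorbed once $K$ is large.

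Next, I would establish a density lower bound inside the cone. Set $\alpha' = (1+\alpha)/2$, fix a large block length $L = L(\alpha)$, and partition $\{-\lfloor n^{y,k}/2 \rfloor, \dots, \lfloor n^{y,k}/2 \rfloor\}$ into consecutive blocks of $L$ positions. Call a block \emph{good} if, at time $n^{y,k}$, it contains at least $\alpha' L/2$ wet sites of the correct parity. The crucial claim is that each block is good with probability at least $1-\delta(K)$, where $\delta(K) \to 0$ as $K \to \infty$. I would establish this by a restart argument: conditioning on the configuration at time $n^{y,k} - n_0$, for a moderate constant $n_0 = n_0(L,\alpha)$ (at which time, by the edge-speed step, the wet set covers an interval wider than the block plus a safety margin), I would observe that the occupation of the block at time $n^{y,k}$ depends only on the bonds in a window of width $O(L+n_0)$, and apply a local version of the edge-speed estimate inside that window. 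Up to a bounded range of dependence between neighbouring blocks (handled by keeping one block out of $c$ for a large constant $c$), the goodness events are $m$-dependent Bernoulli variables with success probability at least $1-\delta(K)$; a standard large-deviation estimate then gives that the fraction of good blocks is at least $\alpha/\alpha'$ with probability at least $1 - C_2 e^{-c_2 n^{y,k}}$, provided $K$ is large enough that $\delta(K) < 1 - \alpha/\alpha'$. This yields $|\mathcal{X}^{y,k}| \geq (\alpha/\alpha') \cdot (n^{y,k}/L) \cdot (\alpha' L/2) = \alpha n^{y,k}/2$, as required.

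The main obstacle will be the second step: while the edge estimate is a direct contour computation, establishing that a block is good with high probability requires a careful analysis of the local dependence structure of $\zeta^{y,k}_{n^{y,k}}$. Since the occupation of a final-time site depends on bonds throughout the past, the block-goodness events are not literally independent; the restart argument, combined with a ``finite speed of propagation'' observation that the occupation at time $n^{y,k}$ of a given block is effectively determined by a bond window of width $O(L+n_0)$ once the configuration at time $n^{y,k} - n_0$ is known, is what reduces the problem to a controllable $m$-dependent large-deviation setting.
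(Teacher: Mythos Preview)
Your first step (the edge-speed contour argument showing that the range of $\zeta^{y,k}_{n^{y,k}}$ covers $\{-\lfloor n^{y,k}/2\rfloor,\dots,\lfloor n^{y,k}/2\rfloor\}$ except with exponentially small probability) is correct and matches the paper's first step. The gap is in your second step. Your restart claims that, conditionally on the configuration at time $n^{y,k}-n_0$, a local estimate inside a window of width $O(L+n_0)$ makes each block good with probability $\ge 1-\delta(K)$; but for that local estimate to say anything you need a wet site inside that window at time $n^{y,k}-n_0$. The edge-speed step only controls $\ell^{y,k}$ and $r^{y,k}$, i.e.\ the extremal wet sites, not the density of wet sites in between: knowing that the \emph{range} of $\zeta^{y,k}_{n^{y,k}-n_0}$ contains the block plus margin does not give you a single wet site near the block to restart from. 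So the argument is circular --- you are assuming at time $n^{y,k}-n_0$ exactly the kind of density information you are trying to prove at time $n^{y,k}$. For the same reason the block-goodness events are not $m$-dependent: whether a block far from the origin is good depends on the whole history of the cluster reaching it, not just on bonds in a bounded window.

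The paper breaks the circularity by inserting the standard coupling identity for one-dimensional oriented percolation (the argument proving (1) in \cite{Durrett_1984}): on the interval $\{\ell^{y,k},\dots,r^{y,k}\}$ the process $\zeta^{y,k}$ started from the single site $0$ coincides with the process using the same bonds but started from all of $\mathds{Z}$ at $1$. After your edge-speed step this reduces the problem to a density estimate for the all-$1$s initial condition, which is then a direct contour argument (end of Section~5 of \cite{Durrett_et_al1988}), with no restart needed. If you want to keep your block scheme, it could replace the Durrett--Schonmann contours for the all-$1$s process, but you still need the coupling step to transfer the conclusion back to $\zeta^{y,k}$; that is the missing ingredient.
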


\begin{proof}[Sketch of proof.]
This proposition comes from classical results in oriented percolation. Firstly, 
if the process survives until time $n^{y,k}$, it has a big ``range'', which means that 
if we define $r^{y,k}=\sup\{r \in \mathds{Z} \,|\, \zeta^{y,k}_{n^{y,k}}(r)=1\}$ and 
$\ell^{y,k}=\inf\{r \in \mathds{Z} \,|\, \zeta^{y,k}_{n^{y,k}}(r)=1\}$, $r^{y,k}$ 
and $|\ell^{y,k}|$ are so large $\{-\lfloor \frac{n^{y,k}}{2} \rfloor,\dots,
\lfloor \frac{n^{y,k}}{2} \rfloor\} \subset \{\ell^{y,k},\dots,r^{y,k}\}$; 
this can be proven with the contour argument in section 11 of \cite{Durrett_1984}. 
  Moreover, the argument that proves (1) in \cite{Durrett_1984} also proves that in 
  $\{\ell^{y,k},\dots,r^{y,k}\}$, $\zeta^{y,k}_{n^{y,k}}$ coincides with the oriented percolation 
  process that has the same bonds, but which starts with all sites at 1 instead of just the origin. 
  Finally, the end of section 5 of \cite{Durrett_et_al1988} contains a contour argument for the latter 
  process which allows to prove that it has a lot of ones; we can use this argument with the same 
  adaptations we used for the contours of proposition~\ref{prop_extinction_time}.
\end{proof}

\section{Proof of proposition \ref{prop_bound_single_coding}}\label{sec_preuve_codings}

In this section we use the auxiliary process defined in section \ref{sec_aux_proc} 
to give a proof of proposition \ref{prop_bound_single_coding}. In order to do that, we need some definitions. 
For any $q' \in ]0,1]$, $K \geq 2$, $q \in [q_K,1]$, $x \in \mathds{Z}^d$, $t \geq K$ and 
$\gamma = (y_k)_{k \in \{0,\dots,\lfloor\frac{t}{K^2}\rfloor\}} \in C_K^N(x,t)$, 
we define $k(\gamma)=\inf\{k \in \{0,\dots,\lfloor\frac{t}{K^2}\rfloor\} 
\,|\, \tau^{y_k,k}=+\infty\}$ if such a $k$ exists; 
in this case we also denote $y(\gamma)=y_{k(\gamma)}$ (in the following, when we write $k(\gamma)$ or 
$y(\gamma)$ without more precision, we always assume that they exist). For any 
$r \in \mathcal{X}^{y(\gamma),k(\gamma)}$ we define the events 
\[
W^{\gamma,\eta}(r) = 
\left\{(\eta_{t-\lfloor\frac{t}{K}\rfloor K})_{y(\gamma)+\frac{r-n^{y(\gamma),k(\gamma)}}{2}a_1u+R} = 0\right\}, 
W^{\gamma,\tilde{\eta}}(r) = 
\left\{(\tilde{\eta}_{t-\lfloor\frac{t}{K}\rfloor K})_{y(\gamma)+\frac{r-n^{y(\gamma),k(\gamma)}}{2}a_1u+R} = 0\right\}.
\]
By proposition \ref{prop_transfer_zeroes2}, if 
$\{\exists r \in \mathcal{X}^{y(\gamma),k(\gamma)}, W^{\gamma,\eta}(r)\} \cap 
\{\exists r \in \mathcal{X}^{y(\gamma),k(\gamma)},W^{\gamma,\tilde{\eta}}(r)\}$,  
then the sites of $y(\gamma) + R$ are at zero at time $t - k(\gamma) K$ in both processes 
$(\eta_t)_{t \in [0,+\infty[}$ and $(\tilde{\eta}_t)_{t \in [0,+\infty[}$, 
in particular $y(\gamma)$ is at zero at time $t - k(\gamma) K$ in both processes, 
therefore $G(\gamma)$ is satisfied. Consequently, 
\begin{align*}
  \mathds{P}_{q',q}(G(\gamma)^c) 
  \leq & \mathds{P}_{q',q}\left(k(\gamma)\text{ does not exist}\right)
  +\mathds{P}_{q',q}\left(k(\gamma)\text{ exists}, |\mathcal{X}^{y(\gamma),k(\gamma)}| \leq \frac{t}{6K}\right)\\
  &+\mathds{P}_{q',q}\left(\left\{|\mathcal{X}^{y(\gamma),k(\gamma)}| > \frac{t}{6K}\right\} \cap 
  \left\{\forall r \in \mathcal{X}^{y(\gamma),k(\gamma)}, 
  W^{\gamma,\eta}(r)^c\right\} \right) \\
  &+\mathds{P}_{q',q}\left(\left\{|\mathcal{X}^{y(\gamma),k(\gamma)}| > \frac{t}{6K}\right\} \cap 
  \left\{\forall r \in \mathcal{X}^{y(\gamma),k(\gamma)}, 
  W^{\gamma,\tilde{\eta}}(r)^c\right\} \right).
\end{align*}
Therefore we only have to prove the following lemmas \ref{lemma_percolation_structure}, 
\ref{lemma_percolation_use} and \ref{lemma_wonderful_rectangles} to prove proposition 
\ref{prop_bound_single_coding}, thus ending the proof of theorem \ref{thm_convergence}: 

\begin{lemma}\label{lemma_percolation_structure}
For any $q' \in ]0,1]$, there exists a constant $K_1=K_1(\mathcal{U}) \geq 2$ such that for any $K \geq K_1$, 
$q \in [q_K,1]$, there exist constants $\breve{c}_1 > 0$ 
and $\breve{C}_1=\breve{C}_1(K) > 0$ such that for any $x \in \mathds{Z}^d$, $t \geq K$, $\gamma \in C_K^N(x,t)$, we have 
$\mathds{P}_{q',q}(k(\gamma)$ does not exist$)\leq \breve{C}_1e^{-\breve{c}_1\frac{t}{K}}$.
\end{lemma}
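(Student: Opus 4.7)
I would exploit the recursive structure by constructing a dynamically chosen sub-sequence of indices. Set $k_0 = 0$ and iterate $k_{j+1} = k_j + \tau^{y_{k_j}, k_j}$ as long as $\tau^{y_{k_j}, k_j} < +\infty$ and $k_j \leq \lfloor t/K^2 \rfloor$. On the event $\{\forall k \in \{0, \dots, \lfloor t/K^2 \rfloor\}, \tau^{y_k, k} < +\infty\}$ that we wish to bound, this recursion never stalls, producing a first $J \geq 1$ with $k_J > \lfloor t/K^2 \rfloor$; the values $n_j := \tau^{y_{k_j}, k_j}$ for $0 \leq j < J$ are then positive integers summing to $k_J > \lfloor t/K^2 \rfloor$. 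Note $k_j \leq \lfloor t/K^2 \rfloor$ for $j<J$ ensures $y_{k_j}$ is defined by $\gamma$.

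The key independence property is that $\{\tau^{y,k} = n\}$ is measurable with respect to the clock rings in the time interval $]t - (k+n)K, t - kK]$, because the level-$n$ bonds of $\zeta^{y,k}$ depend only on clock rings in $]t - (k+n)K, t - (k+n-1)K]$, and death at level $n$ is determined by levels $0, \dots, n$. For any deterministic composition $(n_0, \dots, n_{J-1})$ with $k_j := n_0 + \cdots + n_{j-1}$, the events $\{\tau^{y_{k_j}, k_j} = n_j\}$ depend on the pairwise disjoint intervals $]t - k_{j+1} K, t - k_j K]$; since the Poisson point processes on disjoint time intervals are independent, the events factor, yielding
\[
\mathds{P}_{q',q}\bigl(\tau^{y_{k_0}, k_0} = n_0, \dots, \tau^{y_{k_{J-1}}, k_{J-1}} = n_{J-1}\bigr) = \prod_{j=0}^{J-1} \mathds{P}_{q',q}\bigl(\tau^{y_{k_j}, k_j} = n_j\bigr).
\]

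Bounding each factor via Proposition \ref{prop_extinction_time} by $2 \cdot 9^{n_j} e^{-K n_j / 24}$, using the identity $\sum_{J=1}^{M} \binom{M-1}{J-1} 2^J = 2 \cdot 3^{M-1}$ to sum over compositions of any fixed $M = \sum_j n_j$, and finally summing over $M > \lfloor t/K^2 \rfloor$, we get
\[
\mathds{P}_{q',q}\bigl(k(\gamma)\text{ does not exist}\bigr) \leq \frac{2}{3} \sum_{M > \lfloor t/K^2 \rfloor} (27 e^{-K/24})^M.
\]
For $K \geq K_1$ chosen large enough that $27 e^{-K/24} \leq e^{-K/48}$, the geometric series is bounded by a constant times $e^{-K \lfloor t/K^2 \rfloor / 48}$, which gives $\leq \breve{C}_1 e^{-t/(96K)}$ once $\breve{C}_1 = \breve{C}_1(K)$ is enlarged enough to absorb the trivial regime $t$ small; this yields the lemma with $\breve{c}_1 = 1/96$.

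The main subtlety is the independence argument: although the sub-sequence $(k_j)$ is random, for the purposes of the union bound one must fix the composition $(n_0, \dots, n_{J-1})$ first, at which point the $k_j$ and the corresponding sites $y_{k_j}$ become deterministic, and the factorization follows purely from disjointness in the time coordinate. In particular, potential spatial overlaps between the processes $\zeta^{y_{k_j}, k_j}$ (which may occur since the $y_{k_j}$ can be geometrically close) are harmless, because the independence we exploit is that of the underlying Poisson clocks across time.
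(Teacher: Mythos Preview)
Your approach is correct and follows the same strategy as the paper's proof: build a random subsequence $k_0=0,\,k_{j+1}=k_j+\tau^{y_{k_j},k_j}$, exploit that for a fixed composition $(n_j)$ the events $\{\tau^{y_{k_j},k_j}=n_j\}$ depend on clock rings in the disjoint time strips $]t-k_{j+1}K,\,t-k_jK]$, apply Proposition~\ref{prop_extinction_time} to each factor, and sum over compositions. Your last paragraph correctly isolates the subtlety.

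There is one minor organisational difference worth noting. The paper truncates the last increment, setting $n_L=\lfloor t/K^2\rfloor-\sum_{i<L}\tau^{y_{k_i},k_i}$ so that all compositions sum to \emph{exactly} $\lfloor t/K^2\rfloor$; it then bounds the number of such compositions via Stirling's formula, arriving at $4\lambda(36e\,e^{-K/24})^{\lfloor t/K^2\rfloor}$. You instead keep the full last increment, let $M=\sum_j n_j$ range over all integers $>\lfloor t/K^2\rfloor$, and use the clean identity $\sum_{J=1}^{M}\binom{M-1}{J-1}2^J=2\cdot 3^{M-1}$ to collapse the combinatorics to a geometric tail $\frac{2}{3}\sum_{M>\lfloor t/K^2\rfloor}(27e^{-K/24})^M$. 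Your route is arguably tidier (no Stirling, no separate estimate on the last factor), while the paper's truncation keeps the sum finite and the exponent slightly better; both give the same conclusion with $K_1\geq\max(K_c,\,48\ln 27)$ or the paper's $\max(K_c,\,48(\ln 36+1))$.
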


\begin{lemma}\label{lemma_percolation_use}
For any $q' \in ]0,1]$, there exists a constant $K_2=K_2(\mathcal{U}) \geq 2$ such that 
for any $K \geq K_2$, $q \in [q_K,1]$,  there exist constants 
$\breve{c}_2 > 0$ and $\breve{C}_2=\breve{C}_2(\mathcal{U},K) > 0$ such that for any 
$x \in \mathds{Z}^d$, $t \geq K$, $\gamma \in C_K^N(x,t)$, 
$\mathds{P}_{q',q}(k(\gamma)$ exists, $|\mathcal{X}^{y(\gamma),k(\gamma)}| 
\leq \frac{t}{6K})\leq \breve{C}_2e^{-\breve{c}_2\frac{t}{K}}$.
\end{lemma}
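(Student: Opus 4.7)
The plan is to reduce to proposition \ref{prop_large_deviations} via a union bound over the possible values of $k(\gamma)$. By definition of $k(\gamma)$, the event $\{k(\gamma) = k_0\}$ is contained in $\{\tau^{y_{k_0},k_0} = +\infty\}$ and forces $y(\gamma) = y_{k_0}$, so
\[
\mathds{P}_{q',q}\bigl(k(\gamma)\text{ exists},\,|\mathcal{X}^{y(\gamma),k(\gamma)}| \leq \tfrac{t}{6K}\bigr)
\leq \sum_{k_0=0}^{\lfloor t/K^2 \rfloor}\mathds{P}_{q',q}\bigl(\tau^{y_{k_0},k_0}=+\infty,\,|\mathcal{X}^{y_{k_0},k_0}| \leq \tfrac{t}{6K}\bigr).
\]
Each summand has exactly the form of the left-hand side in proposition \ref{prop_large_deviations}, so all that remains is to express $t/(6K)$ as at most $(\alpha/2)\, n^{y_{k_0},k_0}$ for some admissible $\alpha \in ]0,1[$.

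The critical input is therefore a lower bound on $n^{y_{k_0},k_0} = \lfloor t/K \rfloor - k_0$ uniform in $k_0 \in \{0,\dots,\lfloor t/K^2 \rfloor\}$. Since $k_0 \leq t/K^2$, the subtraction only removes a term of order $t/K^2$, which is negligible compared to $\lfloor t/K\rfloor \sim t/K$: for $K$ large and $t/K$ large enough, a routine calculation gives $n^{y_{k_0},k_0} \geq t/(2K)$. Then $t/(6K) \leq (1/3)\, n^{y_{k_0},k_0}$, and I may apply proposition \ref{prop_large_deviations} with $\alpha = 2/3$, taking $K \geq K_g(2/3)$ (and $K \geq K_2$ absorbing all other requirements on $K$). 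Each summand is then bounded by $C_g e^{-c_g n^{y_{k_0},k_0}} \leq C_g e^{-c_g t/(2K)}$.

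Summing the $\lfloor t/K^2 \rfloor + 1$ terms adds only a polynomial-in-$t$ prefactor, which is absorbed into the exponential at the cost of slightly shrinking the rate; this produces a bound of the desired form $\breve{C}_2 e^{-\breve{c}_2 t/K}$ with $\breve{c}_2$ a little smaller than $c_g/2$. The one minor subtlety is the regime where $t$ is only comparable to $K$ and the naive estimate $n^{y_{k_0},k_0} \geq t/(2K)$ fails; since $\breve{C}_2 = \breve{C}_2(\mathcal{U},K)$ is allowed to depend on $K$, it suffices to enlarge it so that the inequality becomes trivial for this finite range of $t$. I do not anticipate any real obstacle: the argument is a direct reduction to proposition \ref{prop_large_deviations}, and the only care needed is in choosing $\alpha$ so as to match the threshold $t/(6K)$ and in keeping the lower bound on $n^{y_{k_0},k_0}$ uniform in $k_0$.
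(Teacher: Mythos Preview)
Your proposal is correct and follows essentially the same approach as the paper: a union bound over $k_0 \in \{0,\dots,\lfloor t/K^2\rfloor\}$, a uniform lower bound on $n^{y_{k_0},k_0}$ of order $t/K$, application of proposition~\ref{prop_large_deviations} with a suitable $\alpha$, summation of the resulting bound, and enlarging $\breve C_2$ to cover the small-$t$ range. The only cosmetic differences are the specific constants chosen (the paper takes $\alpha=1/2$ and proves $n^{y_k,k}\geq \tfrac{2}{3}\tfrac{t}{K}$ for $t\geq \tfrac{3K^2}{K-3}$, whereas you take $\alpha=2/3$ and $n^{y_{k_0},k_0}\geq \tfrac{t}{2K}$), which are immaterial.
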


\begin{lemma}\label{lemma_wonderful_rectangles}
For any $q' \in ]0,1]$, $K \geq 2$, $q \in [q_K,1]$,  
there exists a constant $\breve{c}_3=\breve{c}_3(\mathcal{U},q') > 0$ 
such that for any $x \in \mathds{Z}^d$, $t \geq K$, $\gamma \in C_K^N(x,t)$, we get 
$\mathds{P}_{q',q}(\{|\mathcal{X}^{y(\gamma),k(\gamma)}| > \frac{t}{6K}\} \cap 
  \{\forall r \in \mathcal{X}^{y(\gamma),k(\gamma)}, 
  W^{\gamma,\eta}(r)^c\})\leq e^{-\breve{c}_3\frac{t}{K}}$ and 
  $\mathds{P}_{q',q}(\{|\mathcal{X}^{y(\gamma),k(\gamma)}| > \frac{t}{6K}\} \cap 
  \{\forall r \in \mathcal{X}^{y(\gamma),k(\gamma)}, 
  W^{\gamma,\tilde{\eta}}(r)^c\})\leq e^{-\breve{c}_3\frac{t}{K}}$.
\end{lemma}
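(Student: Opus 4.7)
Write $s = t - \lfloor t/K\rfloor K \in [0,K)$, and for each admissible $r$ set $\mathcal{R}_r := y(\gamma) + \frac{r - n^{y(\gamma),k(\gamma)}}{2} a_1 u + R$, so that $W^{\gamma,\eta}(r) = \{(\eta_s)_{\mathcal{R}_r} = 0\}$. My strategy is to sandwich $W^{\gamma,\eta}(r)$ from below by the simpler event
\[
\tilde W^{\gamma,\eta}(r) := \bigl\{\eta_0|_{\mathcal{R}_r} = 0\bigr\} \cap \bigl\{\forall x \in \mathcal{R}_r,\ [0,s] \cap \mathcal{P}^1_x = \emptyset\bigr\},
\]
and similarly $\tilde W^{\gamma,\tilde\eta}(r)$. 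A trivial induction on successive clock rings inside $\mathcal{R}_r$ shows $\tilde W^{\gamma,\eta}(r) \subseteq W^{\gamma,\eta}(r)$: starting from all zeros, each $0$-clock ring at an $x \in \mathcal{R}_r$ either keeps $x$ at zero (if its update is accepted) or leaves it unchanged, while $1$-clock rings are excluded. Moreover, the rectangles $(\mathcal{R}_r)_r$ are pairwise disjoint as $r$ ranges over $\mathcal{X}^{y(\gamma),k(\gamma)}$: distinct such $r$'s have the same parity as $n^{y(\gamma),k(\gamma)}$, hence differ by at least $2$, so the corresponding shifts differ by at least $a_1 u$, while $R$ has extent exactly $a_1$ in direction $u$ (with that endpoint excluded).

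Next I would lower bound $\mathds{P}(\tilde W^{\gamma,\eta}(r))$. Under $\nu_{q'}$, the $|R|$ initial values are i.i.d., giving $\mathds{P}(\eta_0|_{\mathcal{R}_r} = 0) = (q')^{|R|}$; independently, the absence of $1$-clock rings on $|R|$ sites during an interval of length $s \leq K$ has probability at least $e^{-|R|(1-q)K}$. The threshold $q_K = 1 + \frac{1}{3K|R|}\ln(1-e^{-K})$ is tailored so that for any $q \in [q_K,1]$ and any $K \geq 2$,
\[
e^{-|R|(1-q)K} \geq e^{-|R|(1-q_K)K} = (1-e^{-K})^{1/3} \geq (1-e^{-2})^{1/3}.
\]
Hence $\mathds{P}(\tilde W^{\gamma,\eta}(r)) \geq p$ with $p := (q')^{|R|}(1-e^{-2})^{1/3}$ a constant depending only on $\mathcal{U}$ and $q'$. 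The identical calculation for $\tilde\eta$, together with the fact that $q \geq q_K \geq q_2$ (since $K \mapsto q_K$ is increasing) makes $q^{|R|} \geq q_2^{|R|}$ a positive constant depending only on $\mathcal{U}$, yields $\mathds{P}(\tilde W^{\gamma,\tilde\eta}(r)) \geq p'$ with $p' = p'(\mathcal{U}) > 0$.

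Finally I would condition on the $\sigma$-algebra $\mathcal{G}$ generated by the clock rings in $]s,t]$. Unwinding the definitions of $\zeta^{y,k}$, $\tau^{y,k}$ and $k(\gamma)$ shows that every bond of every process $\zeta^{y_k,k}$ depends only on clock rings in $]t - \lfloor t/K\rfloor K, t-kK] \subseteq ]s,t]$; consequently $k(\gamma)$, $y(\gamma)$, $n^{y(\gamma),k(\gamma)}$, $\mathcal{X}^{y(\gamma),k(\gamma)}$ and the entire list of rectangles $\mathcal{R}_r$ are $\mathcal{G}$-measurable. On the other hand, each $\tilde W^{\gamma,\eta}(r)$ is determined by $\eta_0|_{\mathcal{R}_r}$ and the $1$-clock rings in $\mathcal{R}_r$ during $[0,s]$, which are all independent of $\mathcal{G}$. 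Since given $\mathcal{G}$ the $\mathcal{R}_r$ are deterministic and disjoint, the events $\bigl(\tilde W^{\gamma,\eta}(r)\bigr)_{r \in \mathcal{X}^{y(\gamma),k(\gamma)}}$ are conditionally independent. Using $W^{\gamma,\eta}(r)^c \subseteq \tilde W^{\gamma,\eta}(r)^c$,
\[
\mathds{P}_{q',q}\bigl(\forall r \in \mathcal{X}^{y(\gamma),k(\gamma)},\ W^{\gamma,\eta}(r)^c \mid \mathcal{G}\bigr) \leq (1-p)^{|\mathcal{X}^{y(\gamma),k(\gamma)}|};
\]
on $\{|\mathcal{X}^{y(\gamma),k(\gamma)}| > t/(6K)\}$ this is bounded by $e^{-pt/(6K)}$, and taking expectations yields the first inequality with $\breve c_3 = p/6$. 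The same argument with $p'$ in place of $p$ gives the second, and taking $\breve c_3 = \min(p,p')/6$ handles both. I expect the only real obstacle to be the bookkeeping required to check rigorously that $y(\gamma), k(\gamma), \mathcal{X}^{y(\gamma),k(\gamma)}$ and the $\mathcal{R}_r$ are $\mathcal{G}$-measurable, so that the conditioning step cleanly decouples the $[0,s]$ and $]s,t]$ randomness.
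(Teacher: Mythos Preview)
Your proposal is correct and follows essentially the same route as the paper: condition on the $\sigma$-algebra generated by the clock rings in $]s,t]$ (the paper calls it $\mathcal{F}$, you call it $\mathcal{G}$), replace $W^{\gamma,\hat\eta}(r)$ by the sub-event ``initial configuration zero on $\mathcal{R}_r$ and no $1$-clock ring there during $[0,s]$'', and exploit the disjointness of the rectangles to factorize. Your explicit constants differ only cosmetically (you use $(1-e^{-2})^{1/3}$ and $q\geq q_2$, the paper uses $(1/2)^{1/3}$ and $q\geq 1/2$), and your write-up is in fact slightly more careful in spelling out the disjointness of the $\mathcal{R}_r$ and the $\mathcal{G}$-measurability of $k(\gamma)$, $y(\gamma)$, $\mathcal{X}^{y(\gamma),k(\gamma)}$.
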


\begin{proof}[Proof of lemma \ref{lemma_percolation_structure}.]
We set $K_1 = \max(K_c,48(\ln36+1))$, which depends only on $\mathcal{U}$.
Let $q' \in ]0,1]$, $K \geq K_1$, $q \in [q_K,1]$, 
$x \in \mathds{Z}^d$, $t \geq K$ and $\gamma=(y_k)_{k \in \{0,\dots,\lfloor\frac{t}{K^2}\rfloor\}} 
\in C_K^N(x,t)$. If $k(\gamma)$ does not exist, $\tau^{y_k,k}$ is finite 
for $k \in \{0,\dots,\lfloor\frac{t}{K^2}\rfloor\}$, 
therefore if we call $k_1=0$  and $k_i=\sum_{j=1}^{i-1}\tau^{y_{k_j},k_j}$ for $i \geq 2$, 
$\tau^{y_{k_i},k_i}$ is finite as long as 
$k_i \leq \lfloor\frac{t}{K^2}\rfloor$. We will use 
proposition \ref{prop_extinction_time} to bound the probability that this happens. 
We call $L = \max\{i \geq 1 \,|\,k_i \leq \lfloor\frac{t}{K^2}\rfloor\}$; 
we then have $\sum_{i=1}^{L}\tau^{y_{k_i},k_i} > \lfloor\frac{t}{K^2}\rfloor$, hence if $n_L$ is the integer satisfying 
$n_L = \lfloor\frac{t}{K^2}\rfloor - \sum_{i=1}^{L-1}\tau^{y_{k_i},k_i}$, 
we have $n_L \leq \tau^{y_{k_L},k_L} < +\infty$. Furthermore, if $n_1,\dots,n_{L-1}$ are integers satisfiying 
$n_i = \tau^{y_{k_i},k_i}$ for $i \in \{1,\dots,L-1\}$, we get 
$n_1+\cdots+n_L=\lfloor\frac{t}{K^2}\rfloor$, $k_i = \sum_{j=1}^{i-1}n_{j}$ 
for all $i \in \{1,\dots,L\}$ (we denote $\sum_{j=1}^{i-1}n_{j}=N_i$). In addition, 
since $\tau^{y_{k},k} \geq 1$ for any $k \in \{0,\dots,\lfloor\frac{t}{K^2}\rfloor\}$, 
$L \leq \lfloor\frac{t}{K^2}\rfloor+1$. We deduce 
\[
\mathds{P}_{q',q}(k(\gamma)\text{ does not exist}) 
\]
\[
\leq \sum_{M \leq \left\lfloor\frac{t}{K^2}\right\rfloor+1,n_1+\cdots+n_M=\left\lfloor\frac{t}{K^2}\right\rfloor}
\mathds{P}_{q',q}(L=M,\forall 1 \leq i \leq M-1,\tau^{y_{N_i},N_i} =n_i, n_M \leq \tau^{y_{N_M},N_M} <+\infty).
\]
Moreover, the events $\{\tau^{y_{k_{N_i}},N_i}=n_i\}$, 
$i \in \{1,\dots,M-1\}$ and $\{n_M \leq \tau^{y_{k_{N_M}},N_M} <+\infty\}$ 
depend only on clock rings in the time intervals $]t-(N_i+n_i)K,t-N_iK] = ]t-N_{i+1}K,t-N_iK]$, 
$i \in \{1,\dots,M-1\}$ and $]t-(N_M+n_M)K,t-N_MK]$, 
which are disjoint, thus the events are independent, hence
\[
\mathds{P}_{q',q}(L=M,\forall 1 \leq i \leq M-1,\tau^{y_{N_i},N_i} =n_i, n_M \leq \tau^{y_{N_M},N_M} <+\infty)
\]
\[
\leq \left(\prod_{i=1}^{M-1}\mathds{P}_{q',q}\left(\tau^{y_{N_i},N_i}=n_i\right)\right)
\mathds{P}_{q',q}\left(n_M \leq \tau^{y_{N_M},N_M} <+\infty\right) 
\leq \prod_{i=1}^{M}\mathds{P}_{q',q}\left(n_i \leq \tau^{y_{N_i},N_i} <+\infty\right)
\]
\[
\leq \prod_{i=1}^{M} 2 . 3^{2n_i}e^{-\frac{Kn_i}{24}}
= 2^M 3^{2 \sum_{i=1}^M n_i}e^{-\frac{K}{24}\sum_{i=1}^M n_i}
= 2^M 3^{2 \left\lfloor\frac{t}{K^2}\right\rfloor}e^{-\frac{K}{24}\left\lfloor\frac{t}{K^2}\right\rfloor}
\]
by proposition \ref{prop_extinction_time} and since $n_1+\cdots+n_M=\left\lfloor\frac{t}{K^2}\right\rfloor$.
Consequently, 
\[
\mathds{P}_{q',q}(k(\gamma)\text{ does not exist}) \leq 
\sum_{M \leq \left\lfloor\frac{t}{K^2}\right\rfloor+1,n_1+\cdots+n_M=\left\lfloor\frac{t}{K^2}\right\rfloor} 
2^M 3^{2 \left\lfloor\frac{t}{K^2}\right\rfloor}e^{-\frac{K}{24}\left\lfloor\frac{t}{K^2}\right\rfloor}.
\]
In addition, lemma \ref{lemma_binomial_coeffs} yields that for any 
$M \in \{1,\dots,\lfloor\frac{t}{K^2}\rfloor+1\}$, we have 
$|\{(n_1,\dots,n_M)\in\mathds{N}^M \,|\, n_1+\cdots+n_M=\lfloor\frac{t}{K^2}\rfloor\}| = 
\binom{M+\lfloor\frac{t}{K^2}\rfloor-1}{M-1} = 
\binom{M+\lfloor\frac{t}{K^2}\rfloor-1}{\lfloor\frac{t}{K^2}\rfloor}$, and 
by the Stirling formula there exists a constant $\lambda > 0$ such that 
\[
\binom{M+\left\lfloor\frac{t}{K^2}\right\rfloor-1}{\left\lfloor\frac{t}{K^2}\right\rfloor} 
\leq \frac{\left(M+\left\lfloor\frac{t}{K^2}\right\rfloor-1\right)^{\!\left\lfloor\frac{t}{K^2}\right\rfloor}}
{\left\lfloor\frac{t}{K^2}\right\rfloor!}
\leq \lambda\!\left(\!\frac{e\left(M+\left\lfloor\frac{t}{K^2}\right\rfloor-1\right)}
{\left\lfloor\frac{t}{K^2}\right\rfloor}\!\right)^{\!\left\lfloor\frac{t}{K^2}\right\rfloor} 
\leq \lambda\!\left(\!\frac{e\left(\lfloor\frac{t}{K^2}\rfloor+\left\lfloor\frac{t}{K^2}\right\rfloor\right)}
{\left\lfloor\frac{t}{K^2}\right\rfloor}\!\right)^{\!\left\lfloor\frac{t}{K^2}\right\rfloor} 
\]
since $M \leq \lfloor\frac{t}{K^2}\rfloor+1$. We deduce 
$|\{(n_1,\dots,n_M)\in\mathds{N}^M \,|\, n_1+\cdots+n_M=\lfloor\frac{t}{K^2}\rfloor\}| \leq 
\lambda(2e)^{\lfloor\frac{t}{K^2}\rfloor}$.
Therefore 
\[
\mathds{P}_{q',q}(k(\gamma)\text{ does not exist}) \leq 
\sum_{M=1}^{\left\lfloor\frac{t}{K^2}\right\rfloor+1} \lambda(2e)^{\left\lfloor\frac{t}{K^2}\right\rfloor}
2^M 3^{2 \left\lfloor\frac{t}{K^2}\right\rfloor}e^{-\frac{K}{24}\left\lfloor\frac{t}{K^2}\right\rfloor} 
\]
\[
\leq \lambda(2e)^{\left\lfloor\frac{t}{K^2}\right\rfloor}
2^{\left\lfloor\frac{t}{K^2}\right\rfloor+2} 3^{2 \left\lfloor\frac{t}{K^2}\right\rfloor}
e^{-\frac{K}{24}\left\lfloor\frac{t}{K^2}\right\rfloor} 
= 4\lambda\left(36ee^{-\frac{K}{24}}\right)^{\left\lfloor\frac{t}{K^2}\right\rfloor}.
\]
In addition, since $K \geq 48(\ln36+1)$, $36ee^{-\frac{K}{48}} \leq 36ee^{-\ln36-1} = 1$, so 
$36ee^{-\frac{K}{24}} \leq e^{-\frac{K}{48}}$, hence 
\[
\mathds{P}_{q',q}(k(\gamma)\text{ does not exist}) 
\leq 4\lambda e^{-\frac{K}{48}\left\lfloor\frac{t}{K^2}\right\rfloor} 
\leq 4\lambda e^{-\frac{K}{48}\left(\frac{t}{K^2}-1\right)}
= 4\lambda e^{\frac{K}{48}}e^{-\frac{t}{48 K}},
\]
which is the lemma.
\end{proof}

\begin{proof}[Proof of lemma \ref{lemma_percolation_use}.] 
This proof is an application of proposition \ref{prop_large_deviations}.
We set $K_2 = \max (4,K_g(1/2))$, which depends only on 
$\mathcal{U}$. Let $q' \in ]0,1]$, $K \geq K_2$, $q \in [q_K,1]$ and $x \in \mathds{Z}^d$. It is enough 
to prove the lemma for $t \geq \max(K,\frac{3K^2}{K-3})$; indeed, if the lemma holds for 
$t \geq \max(K,\frac{3K^2}{K-3})$, one has only to enlarge $\breve{C}_2$ to prove it for $t \geq K$. 
Therefore we set $t \geq \max(K,\frac{3K^2}{K-3})$ and 
$\gamma = (y_k)_{k \in \{0,\dots,\lfloor\frac{t}{K^2}\rfloor\}} \in C_K^N(x,t)$.
If $k(\gamma)$ exists but $|\mathcal{X}^{y(\gamma),k(\gamma)}| \leq \frac{t}{6K}$, we have 
$\tau^{y(\gamma),k(\gamma)} = +\infty$ and $|\mathcal{X}^{y(\gamma),k(\gamma)}| \leq \frac{t}{6K}$, 
hence 
\[
\mathds{P}_{q',q}\left(k(\gamma)\text{ exists}, |\mathcal{X}^{y(\gamma),k(\gamma)}| 
\leq \frac{t}{6K}\right) 
\leq \sum_{k=0}^{\left\lfloor\frac{t}{K^2}\right\rfloor}
\mathds{P}_{q',q}\left(\tau^{y_k,k} = +\infty, |\mathcal{X}^{y_k,k}| \leq \frac{t}{6K}\right). 
\]
We are going to bound the term on the right. For any $k \in \{0,\dots,\lfloor\frac{t}{K^2}\rfloor\}$, 
we have $n^{y_k,k} = \lfloor\frac{t}{K}\rfloor-k \geq 
\lfloor\frac{t}{K}\rfloor-\lfloor\frac{t}{K^2}\rfloor \geq \frac{t}{K}-1-\frac{t}{K^2}$, and since 
$t \geq \frac{3K^2}{K-3}$, $(K-3)t \geq 3K^2$ thus $\frac{1}{3}\frac{t}{K}-\frac{t}{K^2} \geq 1$, so 
$n^{y_k,k} \geq \frac{2}{3}\frac{t}{K}$, hence if we choose $\alpha=\frac{1}{2}$ we have 
$\frac{\alpha}{2}n^{y_k,k} \geq \frac{t}{6K}$. Therefore by proposition \ref{prop_large_deviations}, 
\[
\mathds{P}_{q',q}\left(\tau^{y_k,k} = +\infty, |\mathcal{X}^{y_k,k}| \leq \frac{t}{6K}\right)
\leq C_g e^{-c_g n^{y_k,k}} \leq C_g e^{-c_g \frac{2}{3}\frac{t}{K}}
\]
since $n^{y_k,k} \geq \frac{2}{3}\frac{t}{K}$. Consequently 
\[
\mathds{P}_{q',q}\left(k(\gamma)\text{ exists}, |\mathcal{X}^{y(\gamma),k(\gamma)}| \leq \frac{t}{6K}\right) 
\leq \left(\left\lfloor\frac{t}{K^2}\right\rfloor+1\right)C_g e^{-\frac{2c_g}{3}\frac{t}{K}} 
\leq \left(\frac{t}{K}+1\right)C_g e^{-\frac{2c_g}{3}\frac{t}{K}}, 
\]
which yields lemma \ref{lemma_percolation_use}.
\end{proof}

\begin{proof}[Proof of lemma \ref{lemma_wonderful_rectangles}.]
Let $q' \in ]0,1]$, $K \geq 2$, $q \in [q_K,1]$, 
$x \in \mathds{Z}^d$, $t \geq K$ and $\gamma \in C_K^N(x,t)$. 
The argument is elementary: we notice that there is a positive probability that a rectangle is full of zeroes 
in the initial configurations of the two processes since they have laws $\nu_{q'}$ and $\nu_q$, 
as well as a positive probability that there is no 1-clock ring in the rectangle in the time interval 
$[0,t-K\lfloor\frac{t}{K}\rfloor]$. Therefore there is a positive probability 
that a rectangle is full of zeroes in both processes at time $t-K\lfloor\frac{t}{K}\rfloor$, so if there are 
$\frac{t}{6K}$ elements in $\mathcal{X}^{y(\gamma),k(\gamma)}$, the probability that none of the corresponding 
rectangles is full of zeroes in both processes at time $t-K\lfloor\frac{t}{K}\rfloor$ is of order 
$e^{-\breve{c}_3\frac{t}{K}}$. 

  We notice that $\mathcal{X}^{y(\gamma),k(\gamma)}$ depends only on clock rings 
  in the time interval $]t-K\lfloor\frac{t}{K}\rfloor,t]$, hence if $\mathcal{F}$ 
  is the $\sigma$-algebra generated by the clock rings in $]t-K\lfloor\frac{t}{K}\rfloor,t]$, 
  for $\hat{\eta}=\eta$ or $\tilde{\eta}$, we have 
  \begin{equation}\label{eq_wonderful_rectangles}
  \begin{split}
  \mathds{P}_{q',q}\left(\left\{|\mathcal{X}^{y(\gamma),k(\gamma)}| > \frac{t}{6K}\right\} \cap
  \{\forall r \in \mathcal{X}^{y(\gamma),k(\gamma)}, 
  W^{\gamma,\hat{\eta}}(r)^c\}\right) \\
  = \mathds{E}_{q',q}\left(\mathds{1}_{\{|\mathcal{X}^{y(\gamma),k(\gamma)}| > \frac{t}{6K}\}}
  \mathds{P}_{q',q}(\forall r \in \mathcal{X}^{y(\gamma),k(\gamma)}, 
  W^{\gamma,\hat{\eta}}(r)^c | \mathcal{F})\right).
  \end{split}
\end{equation}

Moreover, 
  \[
  \mathds{P}_{q',q}(\forall r \in \mathcal{X}^{y(\gamma),k(\gamma)}, 
  W^{\gamma,\hat{\eta}}(r)^c | \mathcal{F}) 
  \]
\[
= \mathds{P}_{q',q} \left(\left.\forall r \in \mathcal{X}^{y(\gamma),k(\gamma)}, 
\exists x' \in y(\gamma)+\frac{r-n^{y(\gamma),k(\gamma)}}{2}a_1u+R,
\hat{\eta}_{t-\lfloor\frac{t}{K}\rfloor K}(x') \neq 0 \right| \mathcal{F}\right) \leq 
\]
\[
\mathds{P}_{q',q}\left(\!\forall r \in \mathcal{X}^{y(\gamma),k(\gamma)}, 
\exists x'\in y(\gamma)+\frac{r-n^{y(\gamma),k(\gamma)}}{2}a_1u+R,  
\left. \hat{\eta}_0(x') \neq 0 \text{ or } \mathcal{P}_{x'}^1 \cap  
\left[0,t-\left\lfloor\frac{t}{K}\right\rfloor K\right] \neq \emptyset 
  \right| \mathcal{F}\!\right)
\]
\[
  = \prod_{r \in \mathcal{X}^{y(\gamma),k(\gamma)}} \mathds{P}_{q',q} \left(
  \exists x' \in y(\gamma)+\frac{r-n^{y(\gamma),k(\gamma)}}{2}a_1u+R,
  \hat{\eta}_0(x') \neq 0 \text{ or } \mathcal{P}_{x'}^1 \cap 
\left[0,t-\left\lfloor\frac{t}{K}\right\rfloor K\right] \neq \emptyset \right)
\]
since the events $\{\exists x' \in y(\gamma)+\frac{r-n^{y(\gamma),k(\gamma)}}{2}a_1u+R,\hat{\eta}_0(x') \neq 0$ or 
$\mathcal{P}_{x'}^1 \cap [0,t-\lfloor\frac{t}{K}\rfloor K] \neq 
\emptyset \}$ depend only on the state of $\hat{\eta}_0$ and on the 
clock rings of the time interval $[0,t-K\lfloor\frac{t}{K}\rfloor]$ at the sites of 
$y(\gamma)+\frac{r-n^{y(\gamma),k(\gamma)}}{2}a_1u+R$, so they are mutually independent and independent of 
$\mathcal{F}$. Therefore the invariance by translation yields 
\[
  \mathds{P}_{q',q}(\forall r \in \mathcal{X}^{y(\gamma),k(\gamma)}, 
  W^{\gamma,\hat{\eta}}(r)^c | \mathcal{F}) 
\leq \mathds{P}_{q',q}\left(\exists x' \in R,\hat{\eta}_0(x') \neq 0\text{ or }
\mathcal{P}_{x'}^1 \cap \left[0,t-\!\left\lfloor\frac{t}{K}\right\rfloor\! K\right] 
\neq \emptyset\right)^{|\mathcal{X}^{y(\gamma),k(\gamma)}|}
\]
\[
= \left(1-\mathds{P}_{q',q}\left(\forall x' \in R,\hat{\eta}_0(x') = 0,
\mathcal{P}_{x'}^1 \cap \left[0,t-\left\lfloor\frac{t}{K}\right\rfloor K\right] 
= \emptyset\right)\right)^{|\mathcal{X}^{y(\gamma),k(\gamma)}|}
\]
\[
  = \left(1-\left(\mathds{P}_{q',q}\left(\hat{\eta}_0(0) = 0\right)\mathds{P}_{q',q}\left(
\mathcal{P}_{0}^1 \cap \left[0,t-\left\lfloor\frac{t}{K}\right\rfloor K\right] 
= \emptyset\right)\right)^{|R|}\right)^{|\mathcal{X}^{y(\gamma),k(\gamma)}|}.
\]
Furthermore, since $t-\left\lfloor\frac{t}{K}\right\rfloor K \leq K$ and 
$q \geq q_K = 1+\frac{1}{3K|R|}\ln(1-e^{-K})$, 
\[
\mathds{P}_{q',q}\left( \mathcal{P}_{0}^1 \cap \left[0,t-\left\lfloor\frac{t}{K}\right\rfloor K\right] 
= \emptyset\right) = e^{-(1-q)\left(t-\left\lfloor\frac{t}{K}\right\rfloor K\right)}
\geq e^{\frac{1}{3K|R|}\ln(1-e^{-K})K}
  = (1-e^{-K})^{\frac{1}{3|R|}} 
\geq \left(\frac{1}{2}\right)^{\frac{1}{3|R|}}
\]
since $K \geq 2$. This implies 
\[
  \mathds{P}_{q',q}(\forall r \in \mathcal{X}^{y(\gamma),k(\gamma)}, 
  W^{\gamma,\hat{\eta}}(r)^c | \mathcal{F}) 
\leq \left(1-\mathds{P}_{q',q}\left(\hat{\eta}_0(0) = 0\right)^{|R|}
\left(\frac{1}{2}\right)^{\frac{1}{3}}\right)^{|\mathcal{X}^{y(\gamma),k(\gamma)}|}.
\]
In addition, if $\hat{\eta}=\eta$, $\mathds{P}_{q',q}(\hat{\eta}_0(0) = 0) = q'$, so 
$1-\mathds{P}_{q',q}(\eta_0(0) = 0)^{|R|} (\frac{1}{2})^{\frac{1}{3}} = 1-(q')^{|R|}2^{-\frac{1}{3}}$, 
and if $\hat{\eta}=\tilde{\eta}$, $1-\mathds{P}_{q',q}(\hat{\eta}_0(0) = 0)^{|R|} (\frac{1}{2})^{\frac{1}{3}} 
= 1-q^{|R|}2^{-\frac{1}{3}}$. Moreover, since 
$K \geq 2$, $q \geq q_K = 1+\frac{1}{3K|R|}\ln(1-e^{-K}) \geq 1+ \frac{1}{6|R|}\ln(1-e^{-2}) \geq \frac{1}{2}$, 
hence $1-\mathds{P}_{q',q}(\tilde{\eta}_0(0) = 0)^{|R|} (\frac{1}{2})^{\frac{1}{3}} 
\leq 1-2^{-|R|-\frac{1}{3}}$. This implies that if $\breve{c}_3'$ is the minimum of 
$-\ln(1-(q')^{|R|}2^{-\frac{1}{3}})$ and $-\ln(1-2^{-|R|-\frac{1}{3}})$ 
(which depends only on $\mathcal{U}$ and $q'$), for $\hat{\eta}=\eta$ or $\tilde{\eta}$ we have 
$\mathds{P}_{q',q}(\forall r \in \mathcal{X}^{y(\gamma),k(\gamma)}, 
  W^{\gamma,\hat{\eta}}(r)^c | \mathcal{F}) 
\leq e^{-\breve{c}_3'|\mathcal{X}^{y(\gamma),k(\gamma)}|}$. Consequently, (\ref{eq_wonderful_rectangles}) yields
\[
\mathds{P}_{q',q}\left(\left\{|\mathcal{X}^{y(\gamma),k(\gamma)}| > \frac{t}{6K}\right\} \cap
  \{\forall r \in \mathcal{X}^{y(\gamma),k(\gamma)}, 
  W^{\gamma,\hat{\eta}}(r)^c\}\right) 
  \]
  \[
  \leq \mathds{E}_{q',q}\left(\mathds{1}_{\{|\mathcal{X}^{y(\gamma),k(\gamma)}| > \frac{t}{6K}\}}
  e^{-\breve{c}_3'|\mathcal{X}^{y(\gamma),k(\gamma)}|}\right) \leq e^{-\breve{c}_3'\frac{t}{6K}}, 
\]
which is the lemma.
\end{proof}

\section*{Acknowledgements}

I would like to thank my PhD advisor Cristina Toninelli; 
I also would like to thank Ivailo Hartarsky for his careful reading of this paper and for his suggestions, as 
well as for pointing me to some references.

\end{document}